\newtheorem{theorem}{Theorem}
\newtheorem{definition}{Definition}
\newtheorem{lemma}{Lemma}
\newtheorem{proposition}{Proposition}
\newtheorem{corollary}{Corollary}
\newcommand{\CC}{\mathbb{C}}
\newcommand{\ZZ}{\mathbb{Z}}
\newcommand{\RR}{\mathbb{R}}
\newcommand{\NN}{\mathbb{N}}
\newcommand{\cals}{\mathcal{S}}
\newcommand{\cala}{\mathcal{A}}
\newcommand{\cale}{\mathcal{E}}
\newcommand{\calm}{\mathcal{M}}
\newcommand{\one}{\mathbf{1}}
\newcommand{\sgamma}{\langle s \rangle^\gamma}
\newcommand{\supp}{{\hbox{supp}\,}}
\newcommand{\tsigma}{\sigma^0_{j,a}}
\newcommand{\half}{\frac{1}{2}}
\title{Maximal operators and decoupling for $\Lambda(p)$ Cantor measures} 
\author{Izabella {\L}aba}				
\date{\today}
\begin{document}

\begin{abstract}
For $2\leq p<\infty$, $\alpha'>2/p$, and $\delta>0$,
we construct Cantor-type measures on $\RR$ supported on sets of Hausdorff dimension $\alpha<\alpha'$
for which the
associated maximal operator is bounded from $L^p_\delta (\RR)$ to $L^p(\RR)$.
Maximal theorems for fractal measures on the line
were previously obtained by {\L}aba and Pramanik \cite{LP-diff}. The result here is weaker in that we are not
able to obtain $L^p$ estimates; on the other hand, our approach allows Cantor measures that are 
self-similar, have arbitrarily low dimension $\alpha>0$, and have no Fourier decay.
The proof is based on a decoupling inequality similar to that of {\L}aba and Wang \cite{Laba-Wang}.

2010 MSC: 42B25, 28A80. Keywords: maximal operators, Cantor sets, Hausdorff dimension, decoupling.

\end{abstract}

\maketitle


\section{introduction}


\subsection{The Cantor set constuction}\label{cantor-construct}

We define a Cantor set $E\subset\RR$, with the associated measure $\mu$ supported on it, as follows. 
Let $N$ be a large positive integer, and let $N_0\in\mathbb{N}$ with $0<N_0<N$.
Let $\Sigma$ be a non-empty collection of subsets $S$ of $[N]:=\{0, 1,\dots, N-1\}$ such that
$|S|=N_0$ for all $S\in\Sigma$. Choose $S_1\in\Sigma$, and let
$$A_1={1}+N^{-1}S_1, \ \ E_1= A_1+[0,N^{-1}].$$
For each $a\in A_1$, choose $S_{2,a}\in\Sigma$ and let 
$$
A_{2,a}=a+N^{-2}S_{2,a},\ \ 
A_2=\bigcup_{a\in A_1} A_{2,a},\ \ E_2=A_2+[0,N^{-2}].
$$
Continuing by induction, let $k\geq 2$, and suppose that $A_j$ and $E_j$, $j=1,2,\dots,k$, have been constructed.
For every $a\in A_k$, choose $S_{k+1,a}\in\Sigma$, and let
$$
A_{k+1,a}=a+N^{-k-1}S_{k+1,a},\ \ 
A_{k+1}=\bigcup_{a\in A_k} A_{k+1,a},\ \ E_{k+1}=A_{k+1}+[0,N^{-k-1}].
$$
This yields a sequence of sets $[1,2] \supset E_1\supset E_2\supset E_3\supset\dots$, where each $E_j$ consists of 
$N_0^j$ intervals of length $N^{-j}$. For each $j$, let
$$
\mu_j=\frac{1}{|E_j|} \one_{E_j}.$$
We will identify the functions $\mu_j$ with the absolutely continuous measures $\mu_j\,dx$. It is easy to see that $\mu_j$ converge weakly as $j\to\infty$ to a probability measure $\mu$ supported on the Cantor set $E_\infty=\bigcap_{j=1}^\infty E_j$, and that $E_\infty$ has Hausdorff and Minkowski dimensions both equal to
$\alpha:=\frac{\log N_0}{\log N}$ (so that $N_0=N^\alpha$).
Furthermore, there is a constant $C_\mu>0$ such that for all $x\in \supp\mu$ we have
\begin{equation}\label{main-e1b}
C_\mu^{-1} r^\alpha \leq \mu((x-r,x+r))\leq C_\mu r^\alpha \ \ \ \forall \ r>0.
\end{equation}

We are particularly interested in the self-similar case, with $S_1=S_{j,a}=S$ for a fixed $S\in\Sigma$
and all $j,a$ in the construction. Then $\mu$ is a self-similar measure supported on the set
$$
E_\infty =\left\{x\in[1,2]:\ x={1}+\sum_{j=1}^\infty x_jN^{-j}, \ x_j\in S\hbox{ for all } j\in\NN \right\},
$$
and has similarity dimension $\alpha$. However, self-similarity is not required for our proof.
Our assumptions could be weakened further: for example, the same argument works (with appropriately modified constants) if the assumption that
$|S_{j,a}|=N_0$ for all $j,a$ is replaced by the weaker condition $c^{-1}N_0 \leq |S_{j,a}|\leq cN_0$ for some $c>0$, as long as
$S_{j,a}$ continue to be $\Lambda(p)$-sets. It should also be possible to allow constructions with slowly
varying parameters as in \cite{Laba-Wang}.


\subsection{$\Lambda(p)$ sets}

Our Cantor digit set $A$ will be provided by a theorem of Bourgain on $\Lambda(p)$ sets (\cite{bourg-lambdaP}; see also Talagrand \cite{talagrand}).

\begin{theorem}\label{bourgain-thm} (Bourgain \cite{bourg-lambdaP})
Let $p>2$.
For every $N\in\NN$ sufficiently large, there is a set $S\subset [N]$ of size $|S|\geq c_0 N^{2/p}$ such that
for any set of coefficients $\{c_a\}_{a\in S}$ we have
\begin{equation} \label{lambda-p-0}
\Big\|\sum_{a\in S} c_a e^{2\pi ia x}\Big\|_{L^{p}([0,1])}\leq C(p) \Big(\sum_{a\in S}|c_a|^2\Big)^{1/2},
\end{equation}
with $c_0$ and $C(p)$ independent of $N$.
\end{theorem}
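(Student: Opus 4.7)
The natural route is probabilistic: select $S \subset [N]$ by including each $a \in [N]$ independently with probability $\delta = c_0 N^{2/p-1}$, and let $\xi_a$ be the corresponding $\{0,1\}$-valued selectors. Then $\mathbb{E}|S| = c_0 N^{2/p}$, and a standard Chernoff bound gives $|S| \geq \tfrac{1}{2} c_0 N^{2/p}$ with probability at least $1 - \exp(-cN^{2/p})$. It then suffices to prove that, with positive probability over $S$,
\[
T := \sup_{\|c\|_{\ell^2([N])} \leq 1} \Big\| \sum_{a \in [N]} \xi_a c_a e^{2\pi i a x} \Big\|_{L^p([0,1])} \leq C(p),
\]
which is equivalent to (\ref{lambda-p-0}) holding for all coefficient sequences supported on the realised $S$.

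The strategy would be to estimate $\mathbb{E} T$ in two stages. First, for a single fixed sequence $c$ with $\|c\|_2 \leq 1$, write $X_c(x) := \sum_a \xi_a c_a e^{2\pi i a x}$. Fubini together with either a direct moment expansion (when $p = 2k$ is an even integer, followed by interpolation in $p$) or Bennett/Bernstein inequalities for sums of bounded independent random variables combined with Rademacher symmetrization furnishes a tail bound for $|X_c(x)|$ that is subgaussian in the range $t \lesssim \delta^{-1/2}$ and Poisson-type beyond; integrating yields $\mathbb{E}\, \|X_c\|_{L^p}^p \lesssim \|c\|_2^p$.

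Second, to pass from a single $c$ to the supremum, I would decompose $c$ dyadically according to the size of $|c_a|$ and run a chaining/$\varepsilon$-net argument level by level, using the pointwise tail estimate to control the increments of the process $c \mapsto \|X_c\|_{L^p}$; alternatively, one can appeal directly to Talagrand's majorizing measure theorem. The main obstacle is precisely this second step: the probability $\delta \sim N^{2/p-1}$ is the critical threshold at which the metric entropy of the $\ell^2$-unit ball balances against the Poisson tails of the selector process, and closing the union bound with a constant $C(p)$ independent of $N$ is the technical core of the argument. By contrast, the cardinality estimate on $|S|$ and the moment bound for a single $c$ are essentially routine once the correct value of $\delta$ has been fixed.
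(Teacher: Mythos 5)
The paper does not prove Theorem \ref{bourgain-thm}: it is cited as an external result of Bourgain \cite{bourg-lambdaP} (with an alternative proof later given by Talagrand \cite{talagrand}), so there is no internal argument to compare your sketch against.

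That said, your outline is a faithful summary of the known strategy: random selection at density $\delta \sim N^{2/p-1}$, a Chernoff bound for $|S|$, a moment bound for a single coefficient vector $c$, and a chaining/entropy argument (Bourgain) or a majorizing-measures argument (Talagrand) to control $\sup_{\|c\|_2\le 1}\|X_c\|_{L^p}$. But as written this is not a proof, and the gap is not a technicality that can be elided: the step you defer --- closing the union over the $\ell^2$-ball at exactly the critical density $\delta\sim N^{2/p-1}$ with a constant independent of $N$ --- is the entire content of the theorem, and is notoriously hard. A naive $\varepsilon$-net over the $\ell^2$-ball has $e^{cN}$ points, which no pointwise tail bound at density $\delta$ can beat; the chaining must be organized with great care according to the level sets of $|c_a|$ and the interaction with the Poisson-type tails, and the resulting bookkeeping (Bourgain's original argument) or the construction of a suitable majorizing measure (Talagrand) each occupy most of a long paper. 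The first two steps you describe (cardinality, single-$c$ moments) are, as you say, routine, but precisely because they are routine they are not where the theorem lives. So: correct identification of the route, but the proposal asserts the conclusion of the critical step rather than proving it, and that step cannot be treated as a black box here.
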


The sets $S$ in Theorem \ref{bourgain-thm} are called  
$\Lambda(p)$ sets. It is well known (see \cite{bourg-lambdaP}) that Bourgain's lower bound on $|S|$ is optimal,
so that we must in fact have
\begin{equation}\label{largeA}
c_0 N^{2/p}\leq |S| \leq c_1 N^{2/p}
\end{equation}
with the constant $c_1$ independent of $N$. 
%
For convenience, we will always assume that $S\subset [N-1]$, i.e.\ $N-1\notin S$. This can always be arranged by removing $N-1$ from $S$ and adjusting the constants if necessary.


\subsection{Main result}
We define the maximal operator with respect to a probability measure
$\mu$:
\begin{equation} \label{max-e1}
{\mathcal M} f(x) :=  \sup_{t > 0} \int \left| f(x - ty) \right| d\mu(y)
=\sup_{t>0} \big[ \cala_t |f| \big] (x), \ \ f\in\cals,
\end{equation}
where 
$$\cala_tf := \int f(x-ty)\, d\mu(y) = \int \widehat{f}(\xi) \widehat{\mu}(t\xi) e^{2\pi i x\xi} d\xi .$$

Our main result is a bound on $\mathcal{M}$ when $\mu$ is a Cantor measure with
$\Lambda(p)$ digit sets.  We first specify rigorously the class of measures under consideration.

\begin{definition}\label{def-cantorsets}
We say that $E_\infty=\bigcap_{j\in\NN} E_j \subset\RR$ is a \emph{ $\Lambda(p)$ Cantor set} if
it has been constructed as in Section \ref{cantor-construct}, with the additional constraint that
all sets $S$ in $\Sigma$ are $\Lambda(p)$ sets contained in $[N-1]$ and obeying (\ref{lambda-p-0}) and (\ref{largeA}).
We will also say that the probability measure $\mu$ defined in Section \ref{cantor-construct} and supported
on $E_\infty$ is a \emph{$\Lambda(p)$ Cantor measure},
\end{definition}

Bourgain's theorem ensures that if $p\in (2,\infty)$ is given, then for all sufficiently large $N$ we can choose
$N_0=N_0(N)$ for which there exist $\Lambda(p)$ sets $S\subset[N-1]$ satisfying $|S|=N_0$ and obeying
the $\Lambda(p)$ assumptions (\ref{lambda-p-0}) and (\ref{largeA}) for some $c_0,c_1,C(p)$ independent of $N$. 
We will fix
these $c_0,c_1,C(p)$ throughout this paper, assume $N$ to be sufficiently large, and choose $N_0$ and $\Sigma$
accordingly. Note that $\alpha=(\log N_0)/(\log N)$ may depend slightly on $N$, but by (\ref{largeA}) we will
always have
\begin{equation}\label{good-alpha}
\frac{2}{p}+\frac{\log c_0}{\log N} \leq \alpha \leq \frac{2}{p}+\frac{\log c_1}{\log N} ,
\end{equation}
so that $\alpha$ can be as close to $2/p$ as we wish if $N$ is large enough.

\begin{theorem}\label{thm-main}
Let $2\leq p<\infty$. Then for any $\alpha'>2/p$, $\delta>0$, and for every  
$\Lambda(p)$ Cantor measure $\mu$ with $N$ sufficiently large depending on $p$ and $\alpha'$, we have

(i) $\mu$ is supported on a $\Lambda(p)$ Cantor set $E_\infty$ of Hausdorff dimension $\alpha < \alpha'$,

(ii) the maximal operator $\mathcal M$ given by (\ref{max-e1}) obeys the bound
\begin{equation}\label{max-e-main}
\|\calm f\|_p\leq C'_{N,\delta}\|f\|_{L^p_{\delta}(\RR)},\ \ f\in\cals,
\end{equation}
where $L^p_\delta(\RR)$ is the inhomogeneous Sobolev space with the norm
$$
\|f\|_{L^p_{\delta} }= \Big\| (1-\Delta)^{\delta/2} f\Big\|_p
= \Big\| \Big[ (1+|\xi|^2)^{\delta/2} \widehat{f}\, \Big]^\vee \Big\|_p.
$$

\end{theorem}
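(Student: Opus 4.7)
My plan is to reduce (\ref{max-e-main}) via linearization and Littlewood--Paley decomposition to a frequency-localized estimate, then prove that estimate using an iterated decoupling inequality derived from the $\Lambda(p)$ property (\ref{lambda-p-0}). Concretely, I linearize $\mathcal{M}$ by a measurable selection $t=t(x)>0$ and aim to bound $Tf(x):=\cala_{t(x)}f(x)$ from $L^p_\delta$ to $L^p$ uniformly in $t(\cdot)$. Writing $f=\sum_k P_k f$ with $\widehat{P_k f}$ supported in $|\xi|\sim 2^k$ and splitting $t$ into dyadic blocks $t\sim T$, the trivial bound $\|\cala_t\|_{L^p\to L^p}\leq 1$ handles the case $2^kT\lesssim 1$, so the real task is a frequency-localized estimate of the form
$$
\|T P_k f\|_p \;\leq\; C_N\, (2^kT)^{\varepsilon(N)}\, \|P_k f\|_p,
$$
with $\varepsilon(N)\to 0$ as $N\to\infty$. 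Once $\varepsilon(N)<\delta$, summing in $k$ and $T$ against the Sobolev weight $2^{k\delta}$ yields (\ref{max-e-main}), with a tolerated logarithmic loss from the $T$-sum absorbed into $\delta$.

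For the frequency-localized estimate I would choose $j$ with $N^j\sim 2^kT$ and replace $\widehat{\mu}(t\xi)$ by $\widehat{\mu}_j(t\xi)$ on $|\xi|\sim 2^k$, the difference being controlled by a tail estimate on $\widehat{\mu}-\widehat{\mu}_j$. Since $\mu_j$ is a sum of $N_0^j$ equal bumps indexed by the intervals $\omega$ of $E_j$, this yields $\cala_t P_k f=\sum_\omega \cala_t^\omega P_k f$. The heart of the argument is a Cantor-scale decoupling inequality
$$
\Big\|\sum_\omega g_\omega\Big\|_p \;\leq\; C(p)^j \Big(\sum_\omega \|g_\omega\|_p^2\Big)^{1/2},
$$
valid when the Fourier supports of the $g_\omega$ are separated according to the level-$j$ structure of $E_\infty$ (rescaled by $t$). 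One would prove this by induction on $j$, peeling off one Cantor level at a time: at each step rescale so that the relevant digit set $S_{i,a}$ spans unit frequency, apply (\ref{lambda-p-0}) fibrewise (here it is essential that $C(p)$ is independent of the particular $\Lambda(p)$ set used), and reassemble. Each step costs a factor $C(p)$, giving $C(p)^j$ after iteration; this is the Łaba--Wang-type ingredient alluded to in the introduction.

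Combining the decoupling with individual control on each piece (each $\cala_t^\omega P_k f$ is a single bump average of $L^1$-mass $N_0^{-j}$ and is pointwise dominated by $N_0^{-j}$ times a Hardy--Littlewood maximal function, the $N_0^{j/2}$ from $\ell^2$-aggregation being cancelled by the dual bookkeeping of the decoupling) leaves a net loss of $C(p)^j$. Since $j\sim \log(2^kT)/\log N$ and $C(p)$ is independent of $N$, this loss is at most $(2^kT)^{\log C(p)/\log N}$, which is $\leq (2^kT)^\delta$ as soon as $N$ is large enough, completing the argument; this is also precisely where the freedom to take $\alpha$ close to $2/p$ is exploited. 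The main obstacle I anticipate is the careful formulation and proof of the Cantor-scale decoupling in the non-self-similar setting, where the $S_{j,a}$ vary with the branch: the one-step inequality has to be stated uniformly across branches, and the frequency separation/localization hypotheses have to be matched to the multi-scale structure of $\mu_j$ at every level. Preserving the supremum over $t$ throughout the argument requires that all estimates be pointwise in $x$ and uniform in $t$ within a dyadic block, which is consistent with the decoupling approach but needs to be tracked with care.
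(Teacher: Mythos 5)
Your plan has a genuine gap in the mechanism for handling the supremum over $t$, and this leads to bookkeeping that seems to prove more than is actually true.

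You propose to linearize $\mathcal{M}$ by a measurable selection $t(x)$ and then apply a one-dimensional (fixed-$t$) decoupling inequality to the pieces $\cala_{t(x)}^\omega P_k f$. But after linearization, $\cala_{t(x)}^\omega P_k f$ is no longer a Fourier multiplier applied to $f$, because $t(x)$ varies; the Fourier support conditions required by the decoupling inequality are destroyed. If instead you keep $t$ as a free variable, the decoupling gives you an $L^p(dx)$ estimate for each fixed $t$ (or an $L^p(dx\,dt)$ estimate after integrating), but a bound on $\sup_t \|\cala_t P_kf\|_{L^p(dx)}$ does not imply a bound on $\|\sup_t |\cala_t P_kf|\|_{L^p(dx)}$. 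You acknowledge this tension ("requires that all estimates be pointwise in $x$ and uniform in $t$") but decoupling is inherently not pointwise, and the proposal does not supply the missing device. A quantitative red flag: your final bookkeeping gives $C(p)^j N_0^{-j/2}\|P_kf\|_p$, which is $\lesssim\|P_kf\|_p$ with room to spare for large $N$; this would yield $\|\mathcal{M}f\|_p\lesssim\|f\|_p$ with no Sobolev loss, which the paper explicitly says cannot be obtained by the decoupling approach alone. The missing cost is precisely the $1/p$ derivatives needed to pass from an $L^p_t$ estimate to a supremum in $t$.

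The paper's resolution is to work in the $(x,t)$ plane throughout, following Mockenhaupt--Seeger--Sogge: define $F_\gamma f(x,t)=\langle D_t\rangle^\gamma(\rho(t)\cala_tf(x))$ and use one-dimensional Sobolev embedding in $t$, which requires $\gamma>1/p$. This forces the decoupling to be a genuinely two-dimensional statement for functions of $(x,t)$ whose Fourier transforms live on the ``Cantor bush'' $\bigcup_a\mathcal{K}_{j,a}$ in $(\xi,s)$; the one-dimensional $\Lambda(p)$ decoupling of \L aba--Wang is only the building block. Passing from the one-dimensional inequality to the two-dimensional one requires an additional geometric observation (the branches of the bush inside a parent branch are nearly parallel on short $\xi$-windows, so after a change of coordinates one can apply a ``parallel decoupling'' step as in Bourgain--Demeter); this is the content of Lemmas \ref{dtc-lemma2}, \ref{coordinates}, Corollary \ref{cor-step0}, and Proposition \ref{multidecoupling}, and is not addressed in your sketch. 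Once the 2D decoupling is in place, the exponent count is $\gamma-\alpha/2+O(\epsilon)$ with $\gamma>1/p\approx\alpha/2$, which is $O(\epsilon)$ but not zero, producing the $L^p_\delta\to L^p$ (rather than $L^p\to L^p$) bound in the theorem.

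Two smaller points: the reduction to a single scale in the paper (Lemma \ref{lemma-single-scale}) is by a scaling argument, not linearization; and the individual-piece bound you'd need is $\|F_{j,a}f\|_p\lesssim N^{j(\gamma-\alpha)}\|f\|_p$ (Corollary \ref{cor-falp}), in which the $N^{j\gamma}$ arising from $\langle D_t\rangle^\gamma$ is precisely the quantity your accounting omits.
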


By interpolation with the trivial $L^\infty$ bound, (\ref{max-e-main}) implies the same bound with $p$ replaced by 
$q$ for any $q\in (p,\infty)$.


\subsection{Averaging estimates}

We briefly discuss the implications in terms of averaging estimates. Consider first a single-scale averaging operator
$f\to \mathcal{A}_1 f = f*\mu$ for a probability measure $\mu$ on $\RR$. By Young's inequality, we always have the trivial estimate
\begin{equation}\label{ave-e1}
\|\mathcal{A}_1 f\|_p \leq \|f\|_p,\ \ 1\leq p \leq \infty.
\end{equation}
If the measure $\mu$ satisfies a Fourier decay condition
\begin{equation}\label{fourier-decay}
\widehat{\mu}(\xi)\leq C_\beta (1+|\xi|)^{-\beta},
\end{equation}
we can improve this to an $L^2$-Sobolev estimate by writing
\begin{equation}\label{ave-e2}
\|\mathcal{A}_1 f\|_{L^2_\beta(\RR)} = \Big\| \Big[ (1+|\xi|^2)^{\beta/2} \widehat{\mu}(\xi) \widehat{f}(\xi)\, \Big]^\vee \Big\|_2 \lesssim \|f\|_2.
\end{equation}
It is well known (see e.g. \cite{Wolff}) that if $\mu$ is 
supported on a set of Hausdorff dimension $\alpha$, then
(\ref{fourier-decay}) can only hold for $\beta\leq \alpha/2$. We will say that a measure $\mu$ is a \textit{Salem measure} if 
it has optimal Fourier decay except possibly for the endpoint, i.e. (\ref{fourier-decay}) holds for all $\beta<\alpha/2$.
There are numerous constructions of such measures in the literature; within the framework of our construction of
$\Lambda(p)$ Cantor measures, we can ensure that $\mu$ is Salem by using the ``rotations mod $N$" technique 
of \cite{LP} as in \cite{Laba-Wang}. In that case, we get (\ref{ave-e2}) for all $\beta<\alpha/2$.

On the other hand, when the measure $\mu$ in Theorem \ref{thm-main} is self-similar, it is easy to check that
$\widehat{\mu}(N^j)\not\to 0$ as $j\to\infty$, so that an estimate of the form (\ref{fourier-decay}) cannot hold with any $\beta>0$. In fact,  For such measures, we cannot upgrade (\ref{ave-e1}) to a Sobolev
estimate (consider a sequence of functions with Fourier supports in $O(1)$ neighbourhoods of $N^j$).

Minor modifications of the proof of Theorem \ref{thm-main} yield the following estimates on 
averages of $\mathcal{A}_t f$ with respect to $t$. 

\begin{theorem}\label{thm-average}
Let $2\leq p<\infty$, and let $\mu$ be a   
$\Lambda(p)$ Cantor measure as in Theorem \ref{thm-main}. 
Then:

(i) for every $r$ with $p<r<\infty$, we have
\begin{equation}\label{lol}
 \Big\| \| \mathcal{A}_t f(x) \|_{L^r([1,2],dt)} \Big\|_{L^p(dx)}
\lesssim_N \|f\|_p,
\end{equation}
provided that $N$ is sufficiently large depending on $r$,

(ii) for $p=r$, we have the following Sobolev improvement for $\gamma<\alpha/2$:
\begin{equation}\label{lolsob}
\Big\| \Big[ (1+|\xi|^2)^{\gamma/2} \widehat{f}(\xi)\widehat{\mu}(t\xi)\, \Big]^\vee \Big\|_{L^p(\RR_x \times [1,2]_t)}
\lesssim_N \|f\|_p,
\end{equation}
provided that $N$ is sufficiently large depending on $\gamma$. We use $\ ^\vee$ to denote the inverse Fourier transform in $x$ only.

\end{theorem}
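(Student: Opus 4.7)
The plan is to adapt the proof of Theorem \ref{thm-main}. That proof factors through an intermediate single-scale estimate: for each Littlewood-Paley piece $f_k$ with $\widehat{f_k}$ essentially supported near $|\xi|\sim N^k$, a $\Lambda(p)$ decoupling inequality in the spirit of \cite{Laba-Wang} controls $\|\cala_t f_k\|_{L^p_x L^r_t}$ in terms of $\|f_k\|_p$, with at most a small power of $N^k$ loss that vanishes as $N\to\infty$. The maximal bound of Theorem \ref{thm-main} is obtained by combining this with a one-derivative-shifted version and the Sobolev embedding $W^{s,r}([1,2])\hookrightarrow L^\infty$ for $s>1/r$, at the cost of $\delta$ derivatives on $f$ (with $\delta>1/r$, so $r$ is chosen large in terms of $\delta$). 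Parts (i) and (ii) of Theorem \ref{thm-average} extract this intermediate step before the Sobolev embedding.

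For part (i), I would simply stop before the Sobolev embedding. The intermediate inequality $\bigl\|\|\cala_t f_k\|_{L^r_t}\bigr\|_p\lesssim N^{k\epsilon(r,N)}\|f_k\|_p$, summed over Littlewood-Paley pieces with $N$ sufficiently large so that $\epsilon(r,N)$ is harmlessly small, yields the claimed $L^p\to L^p_x L^r_t$ bound with no Sobolev loss on $f$.

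For part (ii), the base case $p=2$ follows from Plancherel together with the Strichartz-type estimate
$$
\int_1^2 |\widehat{\mu}(t\xi)|^2\,dt\lesssim (1+|\xi|)^{-\alpha},
$$
obtained by writing $|\widehat{\mu}(t\xi)|^2$ as a double integral against $d\mu(x)\,d\mu(y)$, computing the $t$-integral as a kernel bounded by $\min(1,|\xi(x-y)|^{-1})$, and then invoking the Frostman bound \eqref{main-e1b}. For $p>2$, the same $\Lambda(p)$ decoupling transfers this $L^2$-Sobolev character to $L^p$ at each Littlewood-Paley scale, giving a gain of order $N^{-k\alpha/2+k\epsilon}\|f_k\|_p$; summing over $k$ with $N$ sufficiently large absorbs the $N^{k\epsilon}$ factor and yields any Sobolev gain $\gamma<\alpha/2$.

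The main obstacle is verifying that the single-scale decoupling indeed delivers the $L^2$-type gain $N^{-k\alpha/2}$ at each frequency block in part (ii), rather than just the coarser $L^p$ bound sufficient for the maximal theorem. This should require revisiting the Cantor-scale-by-scale factorization of $\widehat{\mu}$ and applying the $\Lambda(p)$ inequality at each level so that the block-wise $L^p$ estimate matches the global $L^2$ Strichartz-type bound scale by scale; once this compatibility is set up, the summation over Littlewood-Paley blocks and the choice of $N$ large proceed as in the proof of Theorem \ref{thm-main}.
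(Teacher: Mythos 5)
The paper's proof of both parts is a one-paragraph modification of the proof of Theorem \ref{thm-main}: for part (i), keep the operator $F_{\gamma,j}=\langle D_t\rangle^\gamma(\rho(t)\cala_{t,j}f)$ but take $\gamma=\frac{1}{p}-\frac{1}{r}+\epsilon$, which makes the exponent in the analogue of (\ref{main-2}) negative, so the estimates sum over $j$ and a genuine $L^p\to L^p_xL^r_t$ bound follows after the Sobolev embedding $W^{\gamma,p}([1,2])\hookrightarrow L^r([1,2])$; for part (ii), replace the factor $\sgamma$ by $\langle\xi\rangle^\gamma$ in the multiplier, run the identical decoupling argument, and no Sobolev embedding is needed.

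Your treatment of part (i) contains a real confusion. You write that the decoupling "controls $\|\cala_t f_k\|_{L^p_xL^r_t}$" and that one then "simply stops before the Sobolev embedding." But the decoupling of Corollary \ref{decoupling-usable} and the kernel estimates of Corollary \ref{cor-falp} both yield bounds in $L^p(\RR^2)=L^p_xL^p_t$, not in the mixed norm $L^p_xL^r_t$ with $r>p$. To pass from $L^p_t$ to $L^r_t$ (the embedding goes the \emph{wrong} way for $r>p$ on $[1,2]$) one still needs the Sobolev embedding in $t$, just with $\gamma>\frac1p-\frac1r$ derivatives rather than $\gamma>\frac1p$. It is precisely this weaker $\gamma$ that turns the exponent $\gamma-\frac\alpha2+2\epsilon$ in (\ref{main-2}) negative, making the sum over Littlewood--Paley scales converge. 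Your described "intermediate inequality" with a small positive $N^{k\epsilon}$ loss would not be summable; the argument only closes because the loss is a small \emph{negative} power once $\gamma$ is lowered.

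For part (ii), your proposal is genuinely different from the paper and, as you yourself flag, incomplete at the crucial step. Your Strichartz-type computation
\[
\int_1^2|\widehat\mu(t\xi)|^2\,dt\lesssim(1+|\xi|)^{-\alpha}
\]
from the Frostman bound (\ref{main-e1b}) is correct and does yield the $p=2$ case of (\ref{lolsob}) by Plancherel. However, the decoupling inequality (\ref{decoupling-early}) is itself an $L^p$ statement with an $\ell^2$ sum on the right; it is not an interpolation device that "transfers $L^2$-Sobolev character to $L^p$." There is no step in the paper's framework that takes an $L^2$ estimate and upgrades it via decoupling. The paper avoids this entirely: with $\langle\xi\rangle^\gamma$ in place of $\sgamma$, one has $\langle\xi\rangle^\gamma\sim N^{j\gamma}$ on the support $|\xi|\sim N^j$, the normalization (\ref{e-ma}) and the kernel bounds of Proposition \ref{prop-k-decay} go through unchanged, Corollary \ref{cor-falp} gives $\|F_{j,a}f\|_p\lesssim N^{j(\gamma-\alpha)}\|f\|_p$, and decoupling over the $N^{j\alpha}$ branches produces the factor $N^{j\alpha/2}$, so the final exponent is $\gamma-\frac\alpha2+O(\epsilon)<0$ for $\gamma<\alpha/2$. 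The $N^{-j\alpha/2}$ gain you want to extract from the $L^2$ Strichartz bound is in fact produced by the $\ell^2$ summation over $A_j$ in the decoupling, not imported from $p=2$.
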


The estimates (\ref{lol}) and (\ref{lolsob}) hold for general $\Lambda(p)$ measures, including the self-similar case when no Fourier decay is available and we cannot do better than (\ref{ave-e1}) for a fixed $t$.


\subsection{A geometric corollary}

Let $X,Y\subset \RR$ be Lebesgue measurable. Suppose that for some choice of positive numbers $\{t(x)\}_{x\in Y}$ we have 
\begin{equation}\label{wtaf}
\bigcup_{x\in Y} (x+t(x)E_\infty) \subset X,
\end{equation}
where $E_\infty$ is a $\Lambda(p)$ Cantor set. 
Let $f=\one_X$. Then $\|f\|_p^p = |X|$, and $\calm f = 1$ on $Y$ so that $\|\calm f\|_p^p \geq |Y|$.
If we knew that $\calm$ is bounded on $L^p(\RR)$, it would follow that $|X|\gtrsim |Y|$; in particular, 
it would follow that if $Y$ has positive measure, then so does $X$,
We are not able to prove this, but we can prove the following weaker statement.

\begin{corollary}\label{WTAF}
Let $\delta>0$, and let $\mu$ be a $\Lambda(p)$ Cantor measure as in Theorem \ref{thm-main} for some 
$p\in[2,\infty)$, with $N$ large enough
depending on $p$ and $\delta$. Let $X,Y\subset\RR$ be sets such that (\ref{wtaf}) holds for some choice of $\{t(x)\}_{x\in Y}$.
Let $X_j$ and $Y_j$ denote the $N^{-j}$-neighbourhoods
of $X$ and $Y$. Then $|X_j|\gtrsim_N N^{-j\delta p} |Y_j|$, uniformly in $j$.
In particular, we have
$$
\overline{\dim_M}(X) \geq \overline{\dim_M} (Y) -\delta,
$$
where we use $\overline{\dim_M}$ to denote the upper Minkowski dimension of a set, and the same is true for
the  lower Minkowski dimension.
\end{corollary}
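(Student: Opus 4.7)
The plan is to apply Theorem~\ref{thm-main} to a smoothing of $\one_{X_j}$ at scale $N^{-j}$ and combine the resulting upper bound on $\|\calm f\|_p$ with a lower bound on $\calm f$ coming from the containment (\ref{wtaf}), as already sketched in the paragraph preceding the statement. As a preliminary, since $-E_\infty$ is again a $\Lambda(p)$ Cantor set with the same constants (replace each digit set $S\in\Sigma$ by $\{N-1-s:s\in S\}$, which is $\Lambda(p)$ with the same $C(p)$), I would reflect $E_\infty$ if needed so that (\ref{wtaf}) reads $\bigcup_{x\in Y}(x-t(x)E_\infty)\subset X$, matching the sign convention in the definition of $\cala_t$.

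Fix $j\in\NN$ and a non-negative $\rho\in\cals(\RR)$ with $\int\rho=1$ and $\supp\rho\subset[-1/2,1/2]$; set $\rho_j(x):=N^j\rho(N^jx)$ and $f:=\one_{X_j}*\rho_j\in\cals(\RR)$. For the lower bound on $Y_j$: given $\tilde x\in Y_j$, pick $x\in Y$ with $|\tilde x-x|\leq N^{-j}/2$ (at worst a harmless factor-of-two loss in $|Y_j|$); for every $y\in E_\infty$ and every $u\in\supp\rho_j$ the point $\tilde x-t(x)y-u$ lies within $N^{-j}$ of $x-t(x)y\in X$, hence in $X_j$, so $\one_{X_j}(\tilde x-t(x)y-u)=1$ throughout $\supp\rho_j$; integrating against $\rho_j(u)\,du$ yields $f(\tilde x-t(x)y)=1$, and consequently $\calm f(\tilde x)\geq\cala_{t(x)}f(\tilde x)=1$. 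Therefore $\|\calm f\|_p\gtrsim|Y_j|^{1/p}$.

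For the upper bound I would write $(1-\Delta)^{\delta/2}f=\one_{X_j}*K_j$ with $\widehat{K_j}(\xi)=(1+|\xi|^2)^{\delta/2}\widehat{\rho}(N^{-j}\xi)$; the substitution $\xi=N^j\eta$, the pointwise bound $(1+N^{2j}|\eta|^2)^{\delta/2}\lesssim 1+N^{j\delta}|\eta|^\delta$, and the Schwartz decay of $\widehat\rho$ (noting that $(|\eta|^\delta\widehat\rho)^\vee$ is in $L^1$, with $O(|x|^{-1-\delta})$ tails) together give $\|K_j\|_1\lesssim N^{j\delta}$, and hence by Young's inequality $\|f\|_{L^p_\delta}\lesssim N^{j\delta}|X_j|^{1/p}$. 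Applying Theorem~\ref{thm-main} (with $N$ large enough in terms of $p$ and $\delta$) and chaining the two bounds gives $|Y_j|^{1/p}\lesssim_N N^{j\delta}|X_j|^{1/p}$, i.e., $|X_j|\gtrsim_N N^{-j\delta p}|Y_j|$ uniformly in $j$. Taking logs, dividing by $j\log N$, and letting $j\to\infty$ in the standard identity $\overline{\dim_M}(Z)=1+\limsup_j\log|Z_j|/(j\log N)$ (and its $\liminf$ analog) then yields the Minkowski-dimension statement, reparameterizing $\delta$ if necessary to absorb the factor of $p$. The only substantive step is the Sobolev bound on $f$; it is a routine rescaled-multiplier estimate, so I do not anticipate real difficulty beyond bookkeeping.
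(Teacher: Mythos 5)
Your argument is essentially the paper's: convolve $\one_{X_j}$ with a scaled bump at scale $N^{-j}$, lower-bound $\calm f$ on $Y_j$ by $1$, upper-bound $\|f\|_{L^p_\delta}$ by $N^{j\delta}|X_j|^{1/p}$, and invoke Theorem~\ref{thm-main}. The only structural difference is your choice of mollifier: the paper takes $\varphi$ with compactly supported $\widehat{\varphi}$ and $\varphi\geq 1$ on $[-1,1]$ (which trivializes the Sobolev estimate since the Fourier support is in $|\xi|\lesssim N^j$, at the cost of $\varphi$ not being compactly supported in $x$), whereas you take $\rho$ compactly supported in $x$ (making the pointwise lower bound on $\cala_{t(x)}f$ immediate, at the cost of a short multiplier computation for $\|K_j\|_1$, which you carry out correctly via the $O(|x|^{-1-\delta})$ tail of $(|\cdot|^\delta\widehat\rho)^\vee$). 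Both work. You also flag two points the paper leaves implicit: the sign mismatch between (\ref{wtaf}) written with $x+t(x)E_\infty$ and $\cala_t f(x)=\int f(x-ty)\,d\mu(y)$ (resolved by replacing each digit set $S$ with $\{N-1-s:s\in S\}$, which is again $\Lambda(p)$), and the fact that passing from $|X_j|\gtrsim_N N^{-j\delta p}|Y_j|$ to the Minkowski-dimension statement requires applying the estimate with $\delta/p$ rather than $\delta$. Both observations are correct and sharpen the exposition.
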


To see this, let $f_j=\one_{X_j} *\varphi_j$, where $\varphi_j(x)=N^j \varphi(N^j x)$ for a Schwartz function 
$\varphi$ such that $\varphi\geq 0$, $\varphi(x)\geq 1$ on $[-1,1]$, and $\widehat{\varphi}$ is supported in
$|\xi|\lesssim 1$. Then $\|f_j \|_{L^p_\delta}\lesssim N^{j\delta} |X_j|^{1/p}$, and 
$\calm f_j \gtrsim 1$ on
$Y_j$, so that $\|\calm f_j\|_p\gtrsim |Y_j|^{1/p}$. The conclusion follows from Theorem \ref{thm-main}.


\subsection{Literature overview}

Maximal and averaging operators associated with measures supported on lower-dimensional submanifolds of $\RR^d$ have been widely
studied in harmonic analysis. A fundamental prototype result in this area is the spherical maximal theorem, due to 
Stein \cite{stein-max} in dimensions $d\geq 3$ and Bourgain \cite{bourg-circles} for $d=2$, which asserts that 
the maximal operator associated with the Lebesgue measure on the sphere $S^{d-1}$ in $\RR^d$ is 
bounded on $L^p(\RR^d)$ for $p>{\frac{d}{d-1}}$. There is a large body of work on similar estimates under varying 
conditions on the dimensionality, smoothness and curvature of the underlying manifold, or on the Fourier decay 
of the measure $\mu$; see e.g. \cite{duo-book} or \cite{stein-ha} for a partial overview.

We mention a few prior results that allow fractal measures on $\RR^d$ with $d\geq 2$. 
A theorem of Rubio de Francia \cite{rdf} provides a maximal estimate for measures $\mu$
on $\RR^d$ that obey the Fourier decay condition (\ref{fourier-decay}) with $\beta>1/2$.
In particular, this allows 
fractal measures for which (\ref{fourier-decay}) holds. However, the result is void when $d=1$, since measures on $\RR$ that are not absolutely continuous 
can never satisfy (\ref{fourier-decay}) with $\beta>1/2$. In a different direction, Iosevich and Sawyer \cite{IS2003} proved a maximal estimate in the special case of spherically symmetric fractals in dimensions $d\geq 2$.
Iosevich, Krause, Sawyer, Taylor and Uriarte-Tuero \cite{IKSTU} studied a variant where the averages in
$\mathcal{A}_t$ are taken with respect to the spherical Lebesgue measure, but the $L^p$ norms of
$f$ and $\mathcal{A}_t f$ are evaluated with respect to fractal measures.

Relatively little is known about maximal estimates for fractal measures in dimension 1. 
The first such results were proved by the author and Pramanik in \cite{LP-diff}. 
Specifically, for any $0<\epsilon < \frac{1}{3}$,  there is a probability measure $\mu=\mu_\epsilon$ supported 
on a set $E_\infty \subset[1,2]$ of Hausdorff dimension $1 - \epsilon$ such that the associated maximal operator $\mathcal{M}$ is
bounded on $L^p(\RR)$ for $p > \frac{1 + \epsilon}{1 - \epsilon}$ (the best possible range would be
$p>1/(1-\epsilon)$). Furthermore, in the case corresponding to
$\epsilon=0$, there exists a probability measure $\mu$ supported 
on a set $E_\infty \subset[1,2]$, of Hausdorff dimension $1$ but Lebesgue measure 0, such that $\mathcal{M}$ is
bounded on $L^p(\RR)$ for all $p >1$.
This implies $L^p$ differentiation theorems with the same range of $p$ (answering a question of Aversa and Preiss).
Results on $L^p\to L^q$ boundedness of appropriately modified maximal operators are also obtained.
The construction in \cite{LP-diff} is probabilistic and relies on ``correlation conditions" (essentially, estimates on the size of intersections of two or more rescaled and translated copies of the support of $\mu$). 
It does not produce explicit examples or allow self-similar sets.

Shmerkin and Suomala \cite{SS-prep} have told me that they were able to improve this as follows:
for $k\in \{2,3,\dots\}$, and for $p_0$ with the dual exponent $p'_0=k$, they construct Cantor measures $\mu$ of dimension $\alpha= \frac{1}{p_0} =1-\frac{1}{k}$ such that $\mathcal{M}$ is
bounded on $L^p$ for the optimal range $p>p_0$. Their proof follows the general scheme of \cite{LP-diff}, but
with improved correlation conditions obtained via the methods of \cite{shmerkin-suomala2014}, \cite{shmerkin-suomala2016}.

It turns out to be very difficult to decide whether specific fractal measures can differentiate $L^p(\RR)$ for sufficiently large but finite $p$.
Math\'e (unpublished, see \cite{Keleti-diff}) has reportedly constructed explicit fractal measures on $\RR$ that cannot differentiate
$L^p(\RR)$ for any $p<\infty$. The problem remains open for self-similar measures, including the middle-third Cantor measure (this question was already raised by Aversa and Preiss in the 1990s; see \cite{LP-diff} for a more thorough discussion of the relevant history).
Hochman \cite{Hochman-diff} proved using entropy methods from \cite{H2014} that if $X\subset\RR$ contains a scaled copy of a Cantor set
$K$ centered at every point of a set $Y$ of positive Hausdorff dimension, then $\dim_H(X)>\dim_H(K)$; however, 
the proof of differentiation would require
a similar estimate with $\dim_H(K)$ replaced by $\dim_H(Y)$ which can be much larger. Our Corollary \ref{WTAF} is a partial result in that direction.

Our present approach via decoupling is not sufficient to yield $L^p$ boundedness of $\mathcal{M}$ for any 
$p<\infty$. It is likely that this will require an additional combinatorial argument; we hope to address this in a 
future paper. On the other hand, Theorem \ref{thm-main} extends the study of maximal operators for Cantor sets on the line in several directions that were not covered in \cite{LP-diff}, \cite{SS-prep}. We can allow $\alpha'$, therefore $\alpha$,
to be arbitrarily small (in \cite{LP-diff}, we require $\alpha>2/3$; Shmerkin and Suomala require $\alpha\geq 1/2$).
Our construction of $\Lambda(p)$ Cantor sets allows self-similar measures, with
$S=S_{j,a}$ the same for all $j$ and $a$. Furthermore, explicit constructions of $\Lambda(p)$ sets are 
available in some cases (e.g. Sidon sets for $p=4$, see \cite{singer}, \cite{bose}, \cite{ruzsa}), hence we can give explicit examples of measures for which the theorem
holds.  This also shows that Theorem \ref{thm-main} can hold for measures 
on $\RR$ without
Fourier decay\footnote{In higher dimensions, a related but different phenomenon arises in the work of  
Keleti, Nagy and Shmerkin \cite{KNS}, Thornton \cite{thornton}, and Olivo and Shmerkin \cite{OS} on packing theorems and maximal operators
associated with cube skeletons.}.
(In \cite{LP-diff}, the ``correlation condition" imposed on our measures forced them to obey
(\ref{fourier-decay}) with some $\beta>0$.)

An unpleasant feature of the problem is that there does not seem to be an easy way to use 
(\ref{fourier-decay}) to obtain further improvements in Theorems \ref{thm-main} and \ref{thm-average}, 
even when $\mu$ is Salem. This is in contrast to papers such as \cite{MSS} or \cite{pra-seeger}, where
both decoupling (or square function estimates) and Fourier decay play a role. One issue is that Fourier-analytic proofs of maximal theorems usually require good estimates on the derivatives of $\widehat{\mu}$,
which are not available in our case.


\subsection{Outline of proof}\label{outline}


We will follow a Fourier-analytic approach, developed in \cite{MSS} and then adapted in \cite{wolff-smoothing},
\cite{pra-seeger} to use decoupling instead of square function estimates. 
Let $$
F_\gamma f(x,t)=\langle D_t \rangle^\gamma \big( \rho(t)\mathcal{A}_tf(x)\big),\ \ f\in\mathcal{S}, 
$$
where
$\rho$ is a smoothed out characteristic function of $[1,2]$, $D_t = \frac{1}{2\pi i} \frac{\partial}{\partial t}$ and $\langle u \rangle = (1+|u|^2)^{1/2}$.
Suppose that we could prove that
\begin{equation}\label{main-estimate-fake}
\|F_\gamma f\|_{L^p(dxdt)} \lesssim \|f\|_{L^p(dx)},\ \ f\in\mathcal{S},
\end{equation}
for some $\gamma>1/p$. Then by the Sobolev embedding theorem, we would have
\begin{align*}
\sup_{t} |\mathcal{A}_t f(x)|
\lesssim  \|F_\gamma f(x,\cdot) \|_{L^p(dt)} ;
\end{align*}
taking the $L^p$ norms in $x$ would then yield a maximal estimate. We will not be able to actually
prove (\ref{main-estimate-fake}), so instead we proceed as follows to get a weaker estimate.

By a standard reduction (see Section \ref{one-scale-section}), it suffices to consider 
the single-scale maximal operator $\tilde{\mathcal{M}}$ with the range of $t$ restricted to $[N^{-1},1]$.
We will be seeking bounds of the form
\begin{equation}\label{fake-2}
\|\tilde\calm f\|_p\leq C_N N^{j\beta} \|f\|_p,\ \ j\geq j_0,
\end{equation}
for some $\beta\in\RR$ and
for all $f$ with $\widehat{f}$ supported in $|\xi|\sim N^j$. Adding the appropriate cut-offs and then 
applying the Sobolev embedding argument, we reduce the problem to estimating
a Fourier multiplier operator $F_j$ given by 
$$
\widehat{F_{j}f} (\xi,s) =  \tilde{m}_{j}(\xi,s) \widehat{f} (\xi),\ \ f\in\cals,
$$
with the multiplier $m_j$ supported (up to small errors) on a neighbourhood of the Cantor bush
$\mathcal{K}_j=\bigcup_{a\in A_j} \mathcal{K}_{j,a}$, where
\begin{align*}
\mathcal{K}_{j,a}=  \left\{ (\xi,s)\in\RR^2:\ \   
|\xi|\sim N^j,\ \ |\xi a -s| \leq 1 \right\}.
\end{align*}

Let $F_j f=  \sum_{a\in A_j} F_{j,a}f $, where (again, up to small errors) $F_{j,a}f$ is Fourier supported on 
$\mathcal{K}_{j,a}$.
The main ingredient of the proof is the decoupling estimate
\begin{equation}\label{decoupling-early}
\|F_{j}f \|_p  \lesssim N^{j\epsilon} \Big( \sum_{a\in A_{j}} \|F_{j,a} f \|_p^2 \Big)^{1/2}
\end{equation}
for some small $\epsilon>0$. 
For each individual $a\in A_j$, we have the estimate
$$
\| F_{j,a}f \|_p \lesssim N^{j(\gamma-\alpha)}  \|f\|_p,
$$
which can be proved by writing out $F_{j,a}$ in its integral operator form and using Young's inequality.
Plugging this into (\ref{decoupling-early}), and summing over $a\in A_j$ with $|A_j|=N^{j\alpha}$, we get
\begin{equation}\label{almost-conclude}
\|F_jf\|_p \lesssim  N^{j \epsilon}
N^{j(\gamma - \alpha/2 )}  \|f\|_p.
\end{equation}
This implies (\ref{fake-2}) with $\beta= \gamma - \frac{\alpha}{2}+\epsilon$. In order to apply Sobolev's embedding theorem, we must have $\gamma>1/p$, and recall from (\ref{good-alpha}) that $1/p$ is very close to $\alpha/2$.
Thus we will not get (\ref{fake-2}) with $\beta>0$ 
(which would be needed in order to prove that $\tilde{\mathcal{M}}$, and therefore $\calm$, is bounded on $L^p$), but we will be able to arrange for $\beta>0$ to be arbitrarily small by taking sufficiently large $N$, which leads
to Theorem \ref{thm-main}.

The decoupling estimate used in (\ref{decoupling-early}) is proved in Section \ref{sec-decoupling}. The argument is
similar to that of  {\L}aba and Wang \cite{Laba-Wang} for $\Lambda(p)$ Cantor sets on the line, and in fact uses
the single-step decoupling inequality from \cite[Lemma 5]{Laba-Wang} as a basic building block. 
The proof in \cite{Laba-Wang} is, in turn, based
on iterating a continuous variant of Bourgain's $\Lambda(p)$ estimate in Theorem \ref{bourgain-thm},
and on the decoupling techniques from the work of Bourgain and Demeter \cite{BD2015}, \cite{BD-expo}. 
The additional geometric observation needed to prove a similar estimate for functions on $\RR^2$ with
Fourier transforms supported in the Cantor bush is the following: at each step of the iterative construction of the Cantor bush, the $k+1$-level branches 
contained in a single $k$-th level branch 
are close to parallel when restricted to $\xi$-intervals of length $N^{j-O(1)}$. This allows us to apply
the ``parallel decoupling" argument (cf. \cite[Section 8]{BD2015}) to pass from one-dimensional 
Cantor sets to a two-dimensional bush.

It is easy to see that, in general, (\ref{decoupling-early}) cannot hold with the exponent $p$  replaced
by $q$ with $q>p$. Indeed, if that were possible, then we could just consider functions whose Fourier transform is supported on
a thin horizontal slice $\{\xi_0\leq  \xi \leq \xi_0+1\}$ of the Cantor bush. Then the estimate (\ref{decoupling-early}) becomes essentially
one-dimensional and any improvement in the exponent would have to correspond to a similar improvement in Bourgain's 
$\Lambda(p)$ theorem, which is known to be impossible. 
We also note that Demeter \cite{demeter-cantor} has proved a decoupling estimate for Cantor sets on 
a parabola; while there is at least a nominal similarity to this paper, his result is based on the curvature of the parabola and
does not apply in our setting.


\section{Initial reductions}


\subsection{Notation}

We write $[N]=\{0,1,\dots,N-1\}$.  For $d=1,2$, we use $|\cdot |$ to denote the Euclidean norm of a vector in $\RR^d$, the cardinality of a finite set, or the $d$-dimensional Lebesgue measure of a subset of $\RR^d$, depending on the context. We will also write $B(x,r)=\{y\in\RR^d:\ |x-y|\leq r\}$.
If $b\in\RR^d$, $c\in\RR$, and $B_1,B_2\subset  \RR^d$, we write
$b+B_1=\{b+b_1:\ b_1\in B_1\}$, $cB_1=\{cb_1:\ b_1\in B_1\}$, and $B_1+B_2=\{b_1+b_2:\ b_1\in B_1,\ b_2\in B_2\}$. .

We use $X \lesssim Y$ to say that $X \leq C Y$ for some constant $C>0$, and $X \sim Y$ to say that $X \lesssim Y$ and $X \gtrsim Y$. The constants such as $C,C'$, etc. and the implicit constants in $\lesssim$ may change from line to line, and may depend on $d$ and $p$, but are independent of variables or parameters such as $x,R, j, k$.
Whenever a constant depends on $N$, we will indicate this explicitly by writing $C_N$, $C(N)$, $X\lesssim_N Y$,
etc; all other constants will be independent of $N$. 

A word on how the constants are organized: in our main decoupling inequality (Proposition \ref{multidecoupling}), we lose a factor of
the form $C^j$ with $C$ independent of $j$. We then want to argue that, given $\epsilon>0$, this can be dominated by $N^{j\epsilon}$, provided that $N$ was chosen large enough depending on $\epsilon$. In order for this to work, it is crucial that the constant $C$ and all constants leading up to it be independent of $N$ as well. All other parts of the proof are
non-iterative and the dependence of the constants there on $N$ is harmless.

For a function $f:\RR\to\CC$, we define its Fourier transform 
$$
\widehat{f}(\xi) =  \int e^{-2\pi i x \xi} f(x)  dx, \qquad  \xi \in \RR,
$$
and similarly for a measure $\mu$ on $\RR$, 
$$
\widehat{\mu}(\xi) = \int e^{-2\pi i x \xi}   d\mu(x) \qquad  \xi \in \RR.
$$
For functions $f:\RR_{x}\times \RR_t\to\CC$, we reserve $\xi$ and $s$ to denote the Fourier variables dual to 
$x$ and $t$ respectively, so that
$\widehat{f}(\xi,s) =  \iint e^{-2\pi i (x  \xi+ts)} f(x,t)  dxdt$. 
We will also sometimes use $\mathcal{F}$ for the Fourier transform, so that $\mathcal{F}f =\widehat{f}$.
If the Fourier transform of a function $f(x,t)$ is taken only in one variable, we will indicate this using  
subscripts, e.g.,
$\mathcal{F}_{x\to\xi}f$. 
We will use the notation
$$
D_x = \frac{1}{2\pi i} \frac{\partial}{\partial x}
$$
 so that $D_x e^{2\pi i x \xi} = \xi e^{2\pi i x  \xi}$, and similarly for the $t$ variable. 
We will also write 
$\langle u \rangle = (1+|u|^2)^{1/2}$. If $p\in[1,\infty]$, we use $p'$ to denote the dual exponent
defined via $\frac{1}{p}+\frac{1}{p'}=1$.

Throughout the rest of this paper, $\mu$ will be a $\Lambda(p)$ Cantor measure as in Theorem \ref{thm-main}.
We also need additional notation associated with Cantor sets. In the introduction, we defined
$A_{k+1,a}=a+N^{-(k+1)}A$ for $a\in A_k$, so that $A_{k+1}=\bigcup_{a\in A_k} A_{k+1,a}$.
Let  
$$
E_{k+1,a}:=A_{k+1,a}+[0,N^{-(k+1)}] = E_{k+1} \cap [a,a+N^{-k}]
$$
so that  $E_{k+1}=\bigcup_{a\in A_k} E_{k+1,a}$. 
We will also use the decomposition $\mu=\sum_{a\in A_k} \mu_{k,a}$, where
$$
\mu_{k,a} = \mu \big|_{a+[0,N^{-k}]}.
$$
In the self-similar case, $\mu_{k,a}$ is a similar copy of $\mu$, rescaled to $a+[0,N^{-k}]$ and 
with total mass $N_0^{-k}=N^{-k\alpha}$.


\subsection{Reduction to a single scale}\label{one-scale-section}

\begin{lemma}\label{lemma-single-scale}
Define the restricted maximal operator 
\begin{equation} \label{max-e103}
\tilde\calm f(x) :=  \sup_{N^{-1}\leq t \leq 1} \left| \int   f(x -ty)  d\mu(y)\right|  
 =\sup_{N^{-1}\leq t \leq 1} |\cala_t f(x)|,\ \ f\in\cals,
\end{equation}
with $\cala_tf := \int f(x-ty)\, d\mu(y)$ as before. Let $2\leq p <\infty$.
Suppose that for some $j_0\in\NN$ we have the estimate
\begin{equation}\label{e-m-restricted}
\|\tilde\calm f\|_p\leq C_N N^{j\beta} \|f\|_p,\ \ j\geq j_0,
\end{equation}
for all $f\in\cals$ with $\supp \widehat{f}\subset \{N^j\leq|\xi| \leq 2N^{j+1}\}$, with the constant $C_N$
independent of $j$. Then the full maximal operator $\calm$ defined in (\ref{max-e1}) obeys
\begin{equation}\label{max-ee1}
\|\calm f\|_p\leq C'_N\|f\|_p \ \ \hbox{if}\ \beta<0,
\end{equation}
\begin{equation}\label{max-ee2}
\|\calm f\|_p\leq C'_{N,\epsilon}\|f\|_{L^p_{\beta+\epsilon}} \ \ \hbox{if}\ \beta>0.
\end{equation}
\end{lemma}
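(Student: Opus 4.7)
The plan is to combine a dyadic decomposition in $t$ with a Littlewood--Paley decomposition of $f$. Writing $\calm f=\sup_{\ell\in\ZZ}\calm_\ell f$ with $\calm_\ell f(x):=\sup_{t\in[N^{\ell-1},N^\ell]}|\cala_t f(x)|$, the substitution $t=N^\ell s$, $u=N^{-\ell}x$ yields the dilation identity
$$\calm_\ell f(x)=\tilde\calm(f_\ell)(N^{-\ell}x),\qquad f_\ell(u):=f(N^\ell u),$$
which transfers any $L^p$ bound on $\tilde\calm$ for functions Fourier-supported in $\{|\eta|\sim N^{m+\ell}\}$ into a bound on $\calm_\ell$ for functions Fourier-supported in $\{|\xi|\sim N^m\}$, with no change in operator norm. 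I would then Littlewood--Paley decompose $f=f_{\mathrm{low}}+\sum_{m\geq j_0}P_m f$, with $P_m f$ Fourier-supported in $\{N^m\leq|\xi|\leq 2N^{m+1}\}$.

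For pairs $(m,\ell)$ with $m+\ell\geq j_0$, the hypothesis (\ref{e-m-restricted}) together with the scaling above gives
$$\|\calm_\ell(P_m f)\|_p\leq C_N N^{(m+\ell)\beta}\|P_m f\|_p.$$
For $m+\ell<j_0$, the rescaled piece is bandlimited with bandwidth at most $N^{j_0+1}$; expanding $\cala_t(P_m f)=f*(\psi_m*\mu_t)$ with $\psi_m$ the Littlewood--Paley kernel at scale $N^{-m}$, one checks that for $t\leq N^{j_0-m}$ the kernel $\psi_m*\mu_t$ admits a uniform $L^1$-normalized radially decreasing envelope, so a standard approximate-identity argument yields the pointwise bound $\sup_{\ell<j_0-m}\calm_\ell(P_m f)(x)\lesssim_N M_{\mathrm{HL}}(P_m f)(x)$, where $M_{\mathrm{HL}}$ is the Hardy--Littlewood maximal function.

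For (\ref{max-ee1}) with $\beta<0$, combining the two regimes via $\sup_\ell\leq(\sum_\ell(\cdot)^p)^{1/p}$ on the hypothesis side and Hardy--Littlewood on the small-$t$ side, the convergent series $\sum_{j\geq j_0}N^{j\beta p}$ yields $\|\calm(P_m f)\|_p\leq C_N\|P_m f\|_p$. Summing over $m$ without losing derivatives requires a vector-valued estimate: one invokes the Littlewood--Paley square function $\|(\sum_m|P_m f|^2)^{1/2}\|_p\sim\|f\|_p$ together with Minkowski in $\ell^2_m L^p_x$ (valid for $p\geq 2$), giving $\|\calm f\|_p\lesssim_N\|f\|_p$. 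For (\ref{max-ee2}) with $\beta>0$, the per-shell growth $N^{m\beta}$ is absorbed by the Sobolev weight: weighted H\"older in $m$ using the $N^{m(\beta+\epsilon)}$ gain from $\|f\|_{L^p_{\beta+\epsilon}}$ leaves a convergent factor $\sum_m N^{-m\epsilon}$, and Littlewood--Paley identifies $\sum_m N^{2m(\beta+\epsilon)}\|P_m f\|_p^2$ with $\|f\|_{L^p_{\beta+\epsilon}}^2$ up to constants. The main technical point will be to verify that the small-$t$ envelope constants depend on $N$ only through $j_0$ and not on the dyadic indices $m,\ell$, so that the double sum in $(m,\ell)$ closes uniformly.
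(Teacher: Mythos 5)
Your overall blueprint --- dyadic decomposition in $t$, Littlewood--Paley decomposition of $f$, rescaling so that the single-scale hypothesis applies, and Hardy--Littlewood for the complementary regime --- is the same as the paper's, but two steps in your version do not close.

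First, you work directly with $\sup_{\ell\in\ZZ}$, without first passing to the truncated operator $\calm_R=\sup_{0<t<R}$ and using the dilation covariance $\calm(f(R\cdot))(x)=(\calm f)(Rx)$ together with monotone convergence to reduce to $R=1$, i.e.\ to $\ell\leq 0$. This reduction is what caps the range of $\ell$. Without it, in the $\beta>0$ case your bound $\|\calm_\ell(P_m f)\|_p\lesssim_N N^{(m+\ell)\beta}\|P_m f\|_p$ is simply not summable over $\ell\to +\infty$ for fixed $m$: you never obtain a finite bound for $\|\calm(P_m f)\|_p$, so there is nothing yet for the Sobolev weight to absorb.

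Second, and more seriously, you decouple the $(m,\ell)$ sums: for each fixed $m$ you collapse the $\ell$-sum to get $\|\calm(P_m f)\|_p\lesssim_N\|P_m f\|_p$, and only then try to sum over $m$. In the $\beta<0$ case the resulting constant has no decay in $m$, so you are left needing $\big\|\sum_m\calm(P_m f)\big\|_p\lesssim\|f\|_p$, which does not follow. The square-function appeal does not repair this: for $p\geq 2$ Minkowski gives
$\big\|(\sum_m|g_m|^2)^{1/2}\big\|_p\leq\big(\sum_m\|g_m\|_p^2\big)^{1/2}$,
so Littlewood--Paley plus this yields $\|f\|_p\lesssim(\sum_m\|P_m f\|_p^2)^{1/2}$ --- the reverse of what you would need. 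The paper's proof keeps the two sums coupled: it bounds $\sup_k$ by an $\ell^p(k)$ norm with the $j$-sum still inside, arriving at
$\Big(\sum_{k\geq 0}\big[\sum_{j\geq j_0+k}N^{(j-k)\beta}\|f_j\|_p\big]^p\Big)^{1/p}$,
then applies discrete Young's inequality to the convolution in $j-k$ to produce $\big(\sum_j\|f_j\|_p^p\big)^{1/p}$, and only at the end uses the pointwise $\ell^2\hookrightarrow\ell^p$ embedding to compare with $\big\|(\sum_j|f_j|^2)^{1/2}\big\|_p\lesssim\|f\|_p$. Your order of operations discards the $\ell^p(m)$ structure and leaves an $\ell^1(m)$ sum that cannot be bounded by $\|f\|_p$. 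To fix the argument, restore the preliminary reduction to a bounded $t$-range, and perform the $k$- and $j$-sums jointly as in the paper rather than computing $\|\calm(P_m f)\|_p$ one $m$ at a time.
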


\begin{proof}
The argument here is well known (see \cite{bourg-circles}), but since it is short and we need to keep
track of the scaling, we include the proof for completeness. Our presentation follows \cite{schlag-thesis}, with the scaling factor 2 replaced by $N$.

It suffices to prove that the bounds (\ref{max-ee1}), (\ref{max-ee2}) hold with $\calm$ replaced by
$$
{\mathcal M_R} f(x) :=  \sup_{0<t<R} \left| \int f(x - ty) d\mu(y)\right| ,
$$
and with constants independent of $R$. By scaling, it suffices to consider $R=1$.

Let $\phi \in C_c^\infty(B(0,2))$ be a function such that $0\leq\phi \leq 1$ and $\phi  (\xi)=1$ for $|\xi|\leq 1$.
Define $\phi_0(\xi)=\phi(N^{-1}\xi)$
and $\phi_j(\xi)=\phi(N^{-j-1 }\xi) - \phi(N^{-j}\xi)$ for $j\in\NN$. Then $\sum_{j=0}^\infty \phi_j\equiv 1$
and $\phi_j$ is supported in the region $N^{j}\leq |\xi|\leq 2N^{j+1}$ for $j>0$. Define $f_j$ via
$$
\widehat{f_j} =\phi_j \widehat{f}
$$
so that $f = \sum_{j=0}^\infty f_j$. Then
\begin{align*}
{\mathcal M_1} f(x) 
&= \sup_{k\geq 0}\sup_{N^{-k-1}\leq t\leq N^{-k}} |\cala_t f(x)|
\\
&\leq  \sup_{k\geq 0}\sup_{N^{-k-1}\leq t\leq N^{-k}} \left| \cala_t \Big[ \sum_{j < j_0+ k} f_j\Big] (x)   \right|
\\
&\ \ \ + \sup_{k\geq 0}\sup_{N^{-k-1}\leq t\leq N^{-k}} \left| \cala_t \Big[ \sum_{j \geq j_0+ k} f_j\Big] (x)   \right|
\\
&=:I_1(x)+I_2 (x).
\end{align*}
The $I_1$ part is dominated by a constant (depending on $N,j_0$) multiple of the Hardy-Littlewood maximal operator, therefore bounded on all $L^p$ with
$p>2$. To estimate $I_2$, we use (\ref{e-m-restricted}) and scaling. Let $f_{j,k}(x)=N^{-k/p}f_j(N^{-k}x)$, then
$\|f_{j,k}\|_p=\|f_j\|_p$ and 
$\widehat{f_{j,k}}$ is supported in $N^{j-k}\leq |\xi|\leq 2N^{j-k+1}$. 
We have 
$$\cala_t f_j(x) = N^{k/p} \big[ \cala_{N^kt} f_{j,k}\big] (N^kx),$$
 so that
\begin{align*}
I_2(x)& \leq 
\sup_{k\geq 0} \sum_{j\geq j_0+ k} \sup_{N^{-k-1}\leq t\leq N^{-k}} \left| \cala_t  f_j (x) \right| 
\\
&\leq  \sup_{k\geq 0} \sum_{j \geq j_0+ k} N^{k/p} \tilde\calm  f_{j,k} (N^k x) 
\\
&\leq \left( \sum_{k\geq 0} \left[ \sum_{j\geq j_0+ k} N^{k/p} \tilde\calm  f_{j,k} (N^k x) \right]^p \right)^{1/p}.
\end{align*}
It follows that
\begin{align*}
\|I_2\|_p &\leq \left( \int \sum_{k\geq 0} \left[ \sum_{j\geq j_0+ k} N^{k/p} \tilde\calm  f_{j,k} (N^k x) \right]^p dx \right)^{1/p}
\\
&= \left( \sum_{k\geq 0} \left\|  \sum_{j\geq j_0+ k} N^{k/p} \tilde\calm  f_{j,k} (N^k \cdot ) \right\|_p^p \right)^{1/p}
\\
&\leq \left( \sum_{k\geq 0}  \left[ \sum_{j\geq j_0+k} \| \tilde\calm  f_{j,k} \|_p \right]^p \right)^{1/p}
\\
&\lesssim_N 
\left( \sum_{k\geq 0}  \left[ \sum_{j\geq j_0+k} N^{(j-k)\beta}\|  f_{j} \|_p \right]^p \right)^{1/p},
\end{align*}
where at the last step we used (\ref{e-m-restricted}) and that $\|f_{j,k}\|_p=\|f_j\|_p$.
If $\beta<0$, we use discrete Young's inequality and then Littlewood-Paley to estimate
\begin{align*}
\|I_2\|_p  \lesssim_N \Big( \sum_j \|f_j\|_p^p \Big)^{1/p}
\lesssim \Big\| \Big( \sum_j |f_j|^2 \Big)^{1/2} \Big\|_p
\lesssim \|f\|_p.
\end{align*}
If on the other hand $\beta>0$, we have instead 
\begin{align*}
\|I_2\|_p  &\lesssim_N 
\left( \sum_{k\geq 0}  N^{-k\beta p} \left[ \sum_{j\geq j_0+k} N^{-j\epsilon} N^{j(\beta+\epsilon)} \|  f_{j} \|_p \right]^p \right)^{1/p}
\\
&\lesssim_N 
\left( \sum_{k\geq 0}  N^{-k\beta p} \left[ \sum_{j\geq j_0+k} N^{-j\epsilon}  \|  \langle D_x \rangle^{\beta+\epsilon}f \|_p \right]^p \right)^{1/p}
\\
&\lesssim_{N,\epsilon}  \|f\|_{L^p_{\beta+\epsilon}}.
\end{align*}

\end{proof}


\subsection{A multiplier problem}
\label{sobolev}

Following \cite{MSS} (see also \cite{pra-seeger}), we perform a further reduction as follows. For $\gamma>0$, 
we define the operator $F_{\gamma}$, mapping functions $f\in\cals(\RR^d_x)$ to Schwartz functions on
$\RR^d_x\times \RR_t$: 
$$
F_\gamma f(x,t)=\langle D_t \rangle^\gamma \big( \rho(t)\cala_tf(x)\big),\ \
$$
where $\rho\in C_c^\infty (\frac{1}{2N}, 2)$ is a fixed function such that $\rho\geq 0$ and $\rho\equiv 1$ on $[\frac{1}{N},1]$.
Recall that $D_t = \frac{1}{2\pi i} \frac{\partial}{\partial t}$ and $\langle u \rangle = (1+|u|^2)^{1/2}$, so that
for a function $h(t)$ we have
$$
\mathcal{F} (\langle D_t \rangle^\gamma h) (s) = \sgamma\, \widehat{h}(s).
$$

Suppose that we can prove that for some function $f\in \cals$,
\begin{equation}\label{main-estimate}
\|F_\gamma f\|_{L^p(dxdt)} \lesssim K \|f\|_{L^p(dx)},
\end{equation}
for some $\gamma$ such that $\gamma p>1$. Then by the Sobolev embedding theorem, we have 
\begin{align*}
\tilde\calm f(x) \lesssim   \sup_{N^{-1}\leq t \leq 1} |\rho(t) \cala_t f(x)|
\lesssim  \|F_\gamma f(x,\cdot) \|_{L^p(dt)} ,
\end{align*}
so that
\begin{equation}\label{main-estimate-b}
\|\tilde\calm f\|_{L^p(dx)} \lesssim   \|Ff \|_{L^p(dxdt)} \lesssim K \|f\|_{L^p(dx)}.
\end{equation}
Our strategy will be to prove (\ref{main-estimate}) (therefore (\ref{main-estimate-b})) 
for all $f\in\cals$ such that $\supp \widehat{f}\subset \{N^j\leq|\xi| \leq 2N^{j+1}\}$, with $K\lesssim_{N} N^{j\beta}$
uniformly in $j$,
then use Lemma \ref{lemma-single-scale} to pass to the unrestricted maximal operator. 

We first set up the appropriate band-limited operators.
Let $\phi$ and $\phi_j$ be the functions defined at the beginning of the proof of Lemma 
\ref{lemma-single-scale}, and define $\sigma_j$ via
$$
\widehat{\sigma_j} =\phi_j \widehat{\mu}
$$
so that $\sigma_j\in\cals$ and $\widehat{\mu} = \sum_{j=0}^\infty \widehat{\sigma_j} $.
Let
$$
F_{\gamma,j} f(x,t)=\langle D_t \rangle^\gamma \left( \rho(t) \int \widehat{f} (\xi) \widehat{\sigma_j}(t\xi) e^{2\pi i x \xi} d\xi
\right).
$$

\begin{lemma}\label{lemma-Fjsuffices}
Let $2\leq p<\infty$ and $\gamma>1/p$. With $F_{\gamma,j}$ as above, suppose that we have the estimate
\begin{equation}\label{main-est-j}
\|F_{\gamma,j} f\|_{L^p(dxdt)} \lesssim_N N^{j\beta} \|f\|_{L^p(dx)},\ \ f\in\cals.
\end{equation}
Then $\calm$ obeys the conclusions (\ref{max-ee1}) or (\ref{max-ee2}) of Lemma \ref{lemma-single-scale},
depending on the sign of $\beta$.
\end{lemma}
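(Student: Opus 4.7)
The plan is to carry out the Sobolev-embedding argument already sketched in Section \ref{sobolev} and then invoke Lemma \ref{lemma-single-scale}. The only substantive issue is that the hypothesis is phrased for the band-limited operators $F_{\gamma,j}$ rather than for $F_\gamma$, so I must verify that when $f$ is Fourier-supported in $\{N^j \leq |\xi| \leq 2N^{j+1}\}$, control of the $F_{\gamma,k}$ for $k$ near $j$ implies control of $F_\gamma f$.

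Concretely, I would fix such an $f$ and apply Sobolev embedding on $\RR_t$ (valid since $\gamma p > 1$), together with the fact that $\rho \equiv 1$ on $[N^{-1}, 1]$, to obtain
$$\|\tilde\calm f\|_{L^p(dx)} \lesssim \|F_\gamma f\|_{L^p(dx\,dt)},$$
exactly as in (\ref{main-estimate-b}). It then remains to estimate the right-hand side via the hypothesis (\ref{main-est-j}). Decomposing $\widehat\mu = \sum_{k \geq 0} \widehat{\sigma_k}$ formally gives $F_\gamma f = \sum_k F_{\gamma,k} f$, but only finitely many summands actually contribute: for $|\xi| \sim N^j$ and $t \in \supp \rho \subset [1/(2N), 2]$, the product $t\xi$ is confined to a dyadic window centered near $N^j$, so $\phi_k(t\xi) = 0$ except when $|k-j| \leq C$ for an absolute constant $C$ (depending only on the support of $\rho$ and on the bumps $\phi_k$). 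Applying (\ref{main-est-j}) to each of the $O(1)$ remaining terms and summing yields
$$\|F_\gamma f\|_{L^p(dx\,dt)} \leq \sum_{|k-j|\leq C} \|F_{\gamma,k} f\|_{L^p(dx\,dt)} \lesssim_N N^{j\beta} \|f\|_p,$$
where the constant absorbs the harmless factor $N^{C|\beta|}$.

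Combining the two displays gives hypothesis (\ref{e-m-restricted}) of Lemma \ref{lemma-single-scale} for all $j \geq j_0$, with $j_0$ chosen large enough that the low-frequency cutoff $\phi_0$ plays no role on the relevant region. That lemma then produces the conclusions (\ref{max-ee1}) or (\ref{max-ee2}) according to the sign of $\beta$. No genuine obstacle arises: the whole argument is a bookkeeping exercise reconciling the Littlewood--Paley decomposition of $\widehat\mu$ with the frequency localization of $\widehat f$, together with the single use of Sobolev embedding in the $t$ variable.
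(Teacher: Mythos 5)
Your proposal matches the paper's argument: the paper invokes Lemma \ref{mult-lemma1} (that $\tilde m_k$ is supported in $\tfrac12 N^k \leq |\xi| \leq 4N^{k+2}$) to deduce that $F_\gamma f = \sum_{k=j-2}^{j+1} F_{\gamma,k} f$ for $f$ with $\supp\widehat f \subset \{N^j \leq |\xi| \leq 2N^{j+1}\}$, $j\geq 3$, and you derive the same finite truncation by the same observation about the range of $t\xi$ on $\supp\rho$. The subsequent Sobolev-embedding step and the appeal to Lemma \ref{lemma-single-scale} are identical to the paper's.
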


\begin{proof}
We prove below in Lemma \ref{mult-lemma1} that $F_{\gamma,]}$ is a Fourier multiplier operator on $\RR^2_{x,t}$
with a multiplier supported in $\half N^j\leq |\xi| \leq 4N^{j+2}$. It follows that for functions $f$ with $\supp \widehat{f}\subset \{N^j\leq|\xi| \leq 2N^{j+1}\}$ with $j\geq 3$, we have 
$$
F_\gamma f = \sum_{k=j-2}^{j+1}  F_{\gamma,k}f.
$$
Therefore, if (\ref{main-est-j}) holds, then so do (\ref{main-estimate}) and (by the above discussion) 
(\ref{main-estimate-b}) with $K=C_N N^{j\beta}$. Hence the assumption (\ref{e-m-restricted}) holds with
$j_0=3$, and the conclusion follows from
Lemma \ref{lemma-single-scale}.
\end{proof}

In the sequel, $\gamma>0$ will be fixed and we will
omit it from notation, writing $F_{\gamma,j}=F_j$. 

\begin{lemma}\label{mult-lemma1}
We have the Fourier multiplier representation
\begin{equation}\label{fajfa-1}
\widehat{F_jf} (\xi,s) =  \tilde{m}_j(\xi,s) \widehat{f} (\xi),\ \ f\in\cals,
\end{equation}
where $\tilde{m}_j(\xi,s) $ is a Schwartz function in $2$ variables, given by
\begin{align*}
\tilde{m}_j(\xi,s) &=  \sgamma \int \sigma_j (y) \widehat{\rho} (\xi  y+ s) dy .
\end{align*}
and supported in $\half N^j\leq |\xi| \leq 4N^{j+2}$.
\end{lemma}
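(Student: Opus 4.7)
The plan is to compute $\widehat{F_j f}(\xi,s)$ by applying the Fourier transform in each variable in turn, starting from the definition
$$F_j f(x,t) = \langle D_t\rangle^\gamma\Big(\rho(t)\int\widehat{f}(\xi)\widehat{\sigma_j}(t\xi)e^{2\pi i x\xi}\,d\xi\Big).$$
First I would take the Fourier transform in $x$ only. The inner integral is already presented as an inverse Fourier transform, so $\mathcal{F}_{x\to\xi}$ applied to the bracket gives $\rho(t)\widehat{\sigma_j}(t\xi)\widehat{f}(\xi)$, and the operator $\langle D_t\rangle^\gamma$ commutes with this partial Fourier transform. Next, applying $\mathcal{F}_{t\to s}$ turns $\langle D_t\rangle^\gamma$ into multiplication by $\sgamma$, so
$$\widehat{F_j f}(\xi,s)=\sgamma\,\widehat{f}(\xi)\cdot \mathcal{F}_{t\to s}\!\left[\rho(t)\widehat{\sigma_j}(t\xi)\right](s).$$
This already shows $F_j$ is a Fourier multiplier operator in $(x,t)$ with multiplier $\tilde m_j(\xi,s)$ equal to the right-hand factor.

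Second, to put $\tilde m_j$ in the stated form, I would write $\widehat{\sigma_j}(t\xi)=\int \sigma_j(y)e^{-2\pi i t\xi y}\,dy$ and substitute to get
$$\mathcal{F}_{t\to s}\!\left[\rho(t)\widehat{\sigma_j}(t\xi)\right](s)=\iint \sigma_j(y)\rho(t)e^{-2\pi i t(\xi y+s)}\,dy\,dt.$$
Since $\sigma_j$ and $\rho$ are Schwartz (recall $\widehat{\sigma_j}=\phi_j\widehat{\mu}\in C_c^\infty$, so $\sigma_j\in\cals$, and $\rho\in C_c^\infty$), the double integral converges absolutely and Fubini applies. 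Exchanging the order of integration and recognizing the inner $t$-integral as $\widehat{\rho}(\xi y+s)$ yields
$$\tilde m_j(\xi,s)=\sgamma\int \sigma_j(y)\,\widehat{\rho}(\xi y+s)\,dy,$$
which is the claimed formula. Smoothness and Schwartz decay of $\tilde m_j$ in $(\xi,s)$ follow from differentiating under the integral sign, using Schwartz decay of $\widehat{\rho}$ together with rapid decay of $\sigma_j$.

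Finally, for the support statement, I would return to the representation $\tilde m_j(\xi,s)=\sgamma\,\mathcal{F}_{t\to s}[\rho(t)\widehat{\sigma_j}(t\xi)](s)$: the multiplier vanishes whenever the function $t\mapsto \rho(t)\widehat{\sigma_j}(t\xi)$ is identically zero. Since $\rho$ is supported in $(\tfrac{1}{2N},2)$ and $\widehat{\sigma_j}=\phi_j\widehat{\mu}$ is supported in $\{N^j\leq|\eta|\leq 2N^{j+1}\}$, for each fixed $\xi$ we need $t\xi$ to hit this shell for some $t\in(\tfrac{1}{2N},2)$. If $|\xi|<\tfrac{1}{2}N^j$ then $|t\xi|<N^j$ for all admissible $t$, and if $|\xi|>4N^{j+2}$ then $|t\xi|>2N^{j+1}$ for all admissible $t$; in either case the integrand is identically zero, so $\tilde m_j(\xi,s)=0$. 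This gives the support bound $\tfrac{1}{2}N^j\leq|\xi|\leq 4N^{j+2}$. There is no real obstacle here—the whole argument is a routine Fubini-and-support bookkeeping exercise, with the only thing to be careful about being the arithmetic on the support endpoints after rescaling by $t\in\supp\rho$.
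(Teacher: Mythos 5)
Your proof is correct and follows essentially the same route as the paper's: compute $\widehat{F_j f}$ by taking the partial Fourier transforms, use Fubini to rewrite $\int e^{-2\pi i ts}\rho(t)\widehat{\sigma_j}(t\xi)\,dt$ in terms of $\sigma_j$ and $\widehat\rho$, and read off the $\xi$-support from $\supp\rho\subset(\tfrac{1}{2N},2)$ together with $\supp\widehat{\sigma_j}\subset\{N^j\leq|\eta|\leq 2N^{j+1}\}$. The only cosmetic difference is that you take $\mathcal{F}_{x\to\xi}$ before $\mathcal{F}_{t\to s}$, whereas the paper does the reverse; the computation and the support arithmetic are otherwise identical.
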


\begin{proof}
Let
$$\cala_{t,j} f := \int f(x-ty)\sigma_j(y)\,dy = \int \widehat{f}(\xi) \widehat{\sigma_j}(t\xi) e^{2\pi i x \xi} d\xi .$$
Then $\rho(t) \cala_{t,j} f\in\cals_{x,t}$, therefore so does $F_jf$. Taking the partial Fourier transform in $t$, we get
\begin{align*}
\mathcal{F}_{t\to s}\left( F_jf \right)
(x,s)
&= \sgamma  \mathcal{F}_{t\to s} \left( \rho(t) \cala_{t,j}f\right) (x,s)
\\
&= \sgamma \int e^{-2\pi i ts} \rho(t) \int \widehat{f}(\xi) \widehat{\sigma_j}(t\xi) e^{2\pi i x \xi} d\xi dt .
\end{align*}
Interchanging the order of integration, we get that
\begin{equation}\label{oops-a}
\mathcal{F}_{t\to s}\left( F_jf \right)
(x,s)
= \int e^{2\pi i x \xi}  \widehat{f}(\xi)  \tilde{m}_j (\xi,s) d\xi ,
\end{equation}
where
$$
\tilde{m}_j(\xi,s) = \sgamma \int e^{-2\pi i ts} \rho(t) \widehat{\sigma_j }(t\xi)  dt.
$$
For $t\in\supp \rho \subset [\frac{1}{2N},2]$,
 $\widehat{\sigma_j} (t\xi)$ as a function of $\xi$ is supported in $[t^{-1}N^{j},2t^{-1}N^{j+1}]
\subset [\frac{1}{2}N^{j}, 4N^{j+2}] $. Therefore $\tilde{m}_j$ 
is a Schwartz function supported in $\half N^j\leq |\xi| \leq 4N^{j+2}$.

Next, we rewrite $\tilde{m}_j$ as
\begin{align*}
\tilde{m}_j(\xi,s) 
&=  \sgamma \int e^{-2\pi i ts} \rho(t) \int \sigma_j(y) e^{-2\pi i t \xi  y} dy  dt
\\
&=  \sgamma \int \sigma_j(y) \Big[  \int \rho(t) e^{-2\pi i t (\xi  y + s) } dt \Big] dy
\\
&= \sgamma \int \sigma_j (y) \widehat{\rho} (\xi  y +s) dy,
\end{align*}
as claimed.
Finally, taking the Fourier transform in $x$ in (\ref{oops-a}) proves (\ref{fajfa-1}) and completes the proof of the lemma.
\end{proof}


\section{Localization estimates}\label{localization}


Recall that $\mu=\sum_{a\in A_j} \mu_{j,a}$, where
$\mu_{j,a} = \mu \big|_{a+[0,N^{-j}]}$.
Accordingly, let
$$\widehat{\sigma_{j,a}} =\phi_j \widehat{\mu_{j,a}}.$$
Then $F_j f = \sum_{a\in A_j} F_{j,a}f$, where
\begin{equation}\label{local-e2}
\begin{split}
&\widehat{F_{j,a}f} (\xi,s) = \tilde{m}_{j,a}(\xi,s) \widehat{f} (\xi),
\\
&\tilde{m}_{j,a}=\sgamma \int \sigma_{j,a} (y) \, \widehat{\rho} (\xi  y+s) dy .
\end{split}
\end{equation}

Recalling the definition of $\phi_j$, we see that $\sigma_{j,a} = \mu_{j,a} * N^{j} \psi (N^j\cdot)$, where
$\psi := \mathcal{F}^{-1} (\phi(N^{-1}\xi) - \phi (\xi))$ is a fixed Schwartz function. Moreover, $\mu_{j,a}$ is
a rescaling to the interval $a+[0,N^{-j}]$ of a Cantor measure of the same type as $\mu$, with the same $N$ and $t$, 
and with total mass $N^{-j\alpha}$.
It follows that
$$
\sigma_{j,a}(y) = N^{j(1-\alpha)} \tsigma (N^{j}(y-a)),
$$ 
where $\tsigma\in\cals$ have all Schwartz seminorms bounded uniformly in $a$ and $j$. 

We now fix an $a\in A_{j}$. For that $a$, we define new coordinates $(\xi, \tau_a)$ in the Fourier space:
$$
(\xi, \tau_a )  := (\xi, s+ a \xi ).
$$
In the rest of this section, $a$ will be fixed and we will supress the dependence of $\tau_a$ on $a$, writing 
$\tau=\tau_a$,
Then
\begin{align*}
\tilde{m}_{j,a} (\xi,s) &= N^{-j\alpha} \langle a \xi - \tau \rangle^\gamma  
\int  N^{jd} \tsigma (N^j (y-a)) \,  \widehat{\rho} (\xi  y- a \xi + \tau) dy
\\
&=  N^{-j\alpha} \langle a \xi - \tau \rangle^\gamma
 \int N^{j} \tsigma (N^j y)  \, \widehat{\rho} (\xi  y + \tau) dy
 \\
&=  N^{-j\alpha} \langle a \xi - \tau \rangle^\gamma
\int  \tsigma( y)   \,\widehat{\rho} (N^{-j} \xi  y+ \tau) dy.
\end{align*}
Define
\begin{equation}\label{e-ma}
m_{j,a} (\xi,\tau) := N^{j(\alpha-\gamma)} \tilde{m}_{j,a} (\xi,s)
= \frac{ \langle a  \xi -\tau  \rangle^{\gamma} }{N^{j\gamma}}  \lambda_{j,a} (\xi,\tau),
\end{equation}
where 
\begin{align*}
\lambda_{j,a}(\xi,\tau)
&= \int  \tsigma( y)   \,\widehat{\rho} (N^{-j} \xi  y+ \tau) dy.
\end{align*}
In addition to changing variables, we also normalized the multipliers to simplify the forthcoming calculations.

We also note the following representations of $F_{j,a}$ as a Fourier integral operator.
\begin{lemma}
We have
\begin{equation}\label{fajfa-2}
F_{j,a} (x,t) =  N^{j(\gamma-\alpha)} \int  f(y)  {K}_{j,a}(x-y,t) dy
\end{equation}
where
\begin{equation}\label{fajfa-3}
K_{j,a}(x ,t) =\iint e^{2\pi i ((x-ta) \xi + t\tau)} m_{j,a} (\xi,\tau ) d\xi d\tau.
\end{equation}
\end{lemma}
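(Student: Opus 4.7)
The plan is to recognize this as a routine Fourier-inversion plus change-of-variables computation, starting from the multiplier representation (\ref{local-e2}) together with the normalization (\ref{e-ma}) that relates $m_{j,a}$ and $\tilde m_{j,a}$ via the linear shift $\tau = s + a\xi$.

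First I would write out $F_{j,a}f$ by Fourier inversion in both $x$ and $t$:
\begin{equation*}
F_{j,a}f(x,t) = \iint \widehat{F_{j,a}f}(\xi,s)\, e^{2\pi i(x\xi+ts)}\, d\xi\, ds
 = \iint \tilde m_{j,a}(\xi,s)\, \widehat f(\xi)\, e^{2\pi i(x\xi+ts)}\, d\xi\, ds.
\end{equation*}
The relation $\tau = s + a\xi$ in (\ref{e-ma}) gives $\tilde m_{j,a}(\xi,s) = N^{j(\gamma-\alpha)}\, m_{j,a}(\xi,\tau)$, and in the inner $s$-integral (with $\xi$ fixed) $ds = d\tau$ and $s = \tau - a\xi$. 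Substituting and collecting the exponent as $x\xi + ts = (x-ta)\xi + t\tau$ gives
\begin{equation*}
F_{j,a}f(x,t) = N^{j(\gamma-\alpha)} \iint m_{j,a}(\xi,\tau)\, \widehat f(\xi)\, e^{2\pi i((x-ta)\xi + t\tau)}\, d\xi\, d\tau.
\end{equation*}

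Next I would expand $\widehat f(\xi) = \int f(y) e^{-2\pi i y\xi}\, dy$ and interchange the order of integration. All these manipulations are justified by the Schwartz-class setting together with the fact, established in Lemma \ref{mult-lemma1}, that $\tilde m_{j,a}$ (hence $m_{j,a}$) has compact support in $\xi$ and is Schwartz in $\tau$, so that the integrands are absolutely integrable. After factoring out the $y$-integration, the bracketed double integral is precisely $K_{j,a}(x-y,t)$ as defined in (\ref{fajfa-3}), giving (\ref{fajfa-2}).

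The only substantive point to verify is the support/decay needed to apply Fubini: the $\xi$-support from Lemma \ref{mult-lemma1} is confined to $\frac{1}{2}N^{j}\le |\xi|\le 4N^{j+2}$, and $\widehat\rho(N^{-j}\xi y + \tau)$ provides Schwartz decay in $\tau$ uniformly on that $\xi$-range, while $\widehat f$ is Schwartz. So no obstacle really arises; the lemma is essentially a bookkeeping statement that records the convolution-kernel form of $F_{j,a}$ after the affine change of frequency coordinates $(\xi,s)\mapsto(\xi,\tau)$, which will be the convenient representation for the Young-type single-tile estimate in the next section.
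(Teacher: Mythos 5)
Your proposal is correct and matches the paper's proof: both invert the Fourier multiplier representation $\widehat{F_{j,a}f}=\tilde m_{j,a}\widehat f$ to obtain a convolution kernel, then perform the unimodular affine change of variables $(\xi,s)\mapsto(\xi,\tau)$ with $\tau=s+a\xi$ and substitute the normalization $\tilde m_{j,a}=N^{j(\gamma-\alpha)}m_{j,a}$ to arrive at $K_{j,a}$. The added remarks on Fubini justification are fine but unnecessary, since Lemma~\ref{mult-lemma1} already records that $\tilde m_{j,a}$ is Schwartz with compact $\xi$-support.
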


\begin{proof}
Taking the inverse Fourier transform of the first equation in (\ref{local-e2}) in both $x$ and $t$, we get that
$F_{j,a} (x,t) = \int  f(y)  \tilde{K}_{j,a}(x-y,t) dy$, where
$$
\tilde{K}_{j,a}(x,t) =\iint e^{2\pi i (x  \xi + ts)} \tilde{m}_{j,a} (\xi,s) d\xi ds.
$$
Substituting $\tilde{m}_{j,a} (\xi,s)= N^{j(\gamma-\alpha)}m_{j,a} (\xi,\tau)$ and  
changing variables $(\xi,s)\to (\xi,\tau)$ in the integral defining $\tilde{K}_{j,a}$, with Jacobian $\Big|\frac{\partial(\xi,\tau)}{\partial(\xi,s)}\Big|=1$, we get (\ref{fajfa-3}).
\end{proof}

\begin{proposition}\label{prop-k-decay}
For any $M\in\NN$, we have
\begin{align}\label{k-decay}
\big|K_{j,a}(x,t)\big| \leq C_{M,N} N^j \min & \left[ \Big(1+ N^{2j}|x- ta|^2 \Big)^{-M}, \right.
\\
& \left. \Big(1+ |t|^2 \Big)^{-M} \right] \label{k-decay-t}
\end{align}
with $C_{M,N}$ independent of $j$ and $a$. 

\end{proposition}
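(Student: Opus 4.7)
The plan is to derive three estimates for $K_{j,a}$ from the double oscillatory integral (\ref{fajfa-3}): a trivial uniform bound $|K_{j,a}|\lesssim_N N^j$, polynomial decay in $|x-ta|$ at scale $N^{-j}$ via integration by parts in $\xi$, and polynomial decay in $|t|$ at unit scale via integration by parts in $\tau$. All three reduce to estimating $\iint|\partial_\xi^k\partial_\tau^\ell m_{j,a}|\,d\xi d\tau$ for appropriate $(k,\ell)$.

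The first technical ingredient is a pointwise Schwartz-type estimate $|m_{j,a}(\xi,\tau)|\lesssim_{L,N}(1+|\tau|)^{-L}$ for every $L$, uniformly for $\xi$ in the support $\frac{1}{2}N^j\leq|\xi|\leq 4N^{j+2}$. This follows from the Schwartz behavior of $\widehat\rho$ and $\sigma^0_{j,a}$: on the effective $y$-support $|y|\lesssim 1$ one has $|N^{-j}\xi y|\lesssim_N 1$, so $|\widehat\rho(N^{-j}\xi y+\tau)|\lesssim_L(1+|\tau|)^{-L}$, and the polynomial-growth prefactor $\langle a\xi-\tau\rangle^\gamma/N^{j\gamma}$ is absorbed by taking $L$ large. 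Integrating and using that the $\xi$-support has length $\sim_N N^j$ yields $|K_{j,a}(x,t)|\leq\iint|m_{j,a}|\,d\xi d\tau\lesssim_N N^j$, the trivial bound.

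For the $|x-ta|$ decay I would integrate by parts $2M$ times in $\xi$, with no boundary terms since $m_{j,a}$ is Schwartz. The main observation is that $|\partial_\xi^k m_{j,a}(\xi,\tau)|\lesssim_{k,L,N} N^{-jk}(1+|\tau|)^{-L}$ in the main region $|a\xi-\tau|\sim N^j$: each $\xi$-derivative of the prefactor $\langle a\xi-\tau\rangle^\gamma/N^{j\gamma}$ produces $\gamma a(a\xi-\tau)\langle a\xi-\tau\rangle^{\gamma-2}/N^{j\gamma}$, which is $O(N^{-j})$ times the original on the main region, while each $\xi$-derivative of $\lambda_{j,a}$ brings down $N^{-j}y$ under the $y$-integral. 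This gives $\iint|\partial_\xi^{2M}m_{j,a}|\lesssim_{M,N} N^{j(1-2M)}$, hence $|K_{j,a}(x,t)|\lesssim_{M,N} N^{j(1-2M)}|x-ta|^{-2M}$; combining with the trivial bound and using $(1+u)^{2M}\gtrsim(1+u^2)^M$ yields the first bound in (\ref{k-decay}).

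For the $|t|$ decay I would integrate by parts $2M$ times in $\tau$. Here the $\tau$-derivatives do not gain factors of $N^{-j}$, but they preserve integrability: $\partial_\tau^\ell\lambda_{j,a}$ simply replaces $\widehat\rho$ by $\widehat\rho^{(\ell)}$ (still Schwartz), and $|\partial_\tau^\ell[\langle a\xi-\tau\rangle^\gamma/N^{j\gamma}]|\lesssim\langle a\xi-\tau\rangle^{\gamma-\ell}/N^{j\gamma}\lesssim 1$ on the main region. The same Schwartz-decay argument as for the trivial bound gives $\iint|\partial_\tau^{2M}m_{j,a}|\lesssim_{M,N} N^j$, and hence $|K_{j,a}(x,t)|\lesssim_{M,N} N^j|t|^{-2M}$; combining with the trivial bound produces (\ref{k-decay-t}). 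The main technical obstacle is the thin ``small region'' $|a\xi-\tau|\lesssim 1$ in the $\xi$-integration by parts, where the expected $N^{-j}$ gain per $\xi$-derivative fails because the prefactor is near its minimum and its derivatives are only bounded by $N^{-j\gamma}$; however, in this region $|\tau|\sim N^j$ forces $|\lambda_{j,a}|\lesssim_{L,N} N^{-jL}$ via Schwartz decay of $\widehat\rho$, and choosing $L$ large enough (depending on $M$) reabsorbs the loss.
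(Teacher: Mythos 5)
Your proposal follows essentially the same route as the paper: integrate by parts in $\xi$ (at scale $N^j$) or in $\tau$ (at unit scale), factor $m_{j,a}$ into the prefactor $\langle a\xi-\tau\rangle^\gamma/N^{j\gamma}$ and the Schwartz part $\lambda_{j,a}$, control prefactor derivatives by cases, and exploit the rapid $\tau$-decay of $\lambda_{j,a}$ (uniform for $|N^{-j}\xi|\sim_N 1$), with the overall $N^j$ coming from integrating over the $\xi$-support strip. One cosmetic caveat: your two named regions ``$|a\xi-\tau|\sim N^j$'' and ``$|a\xi-\tau|\lesssim 1$'' do not cover the intermediate range $1\ll|a\xi-\tau|\ll N^j$, but the rescue you identify (that $|\tau|\gtrsim N^j$ forces $\lambda_{j,a}$ to be tiny) in fact applies throughout $|a\xi-\tau|\leq\tfrac14 N^j$, since $a\in[1,2)$ and $|\xi|\geq\tfrac12 N^j$ give $|a\xi|\gtrsim N^j$; the paper organizes this more cleanly by splitting directly on $|\tau|\lessgtr\tfrac14 N^j$ in the verification of (\ref{bam2}).
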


\begin{proof}
We first prove the bound in (\ref{k-decay}).
Let $u=x-ta$.  Let also $L_M=(1+ N^{2j}|u|^2 )^{-M} (1+N^{2j} D_\xi^2)^M $, so that $L_M e^{2\pi i u\xi} = e^{2\pi i u\xi}$. 
Integrating by parts in $\xi$ (with boundary terms 0, since $m_{j,a}\in\cals$), we get
\begin{align*}
(1+ N^{2j}|u|^2 )^{M} K_{j,a}(x,t)
& =(1+ N^{2j}|u|^2 )^{M} \iint e^{2\pi i t\tau} (L_M e^{2\pi i u\xi} ) m_{j,a} (\xi,\tau ) d\xi d\tau
\\
& =  (1+ N^{2j}|u|^2 )^{M} \iint e^{2\pi i (t\tau  +u \xi) } ) ( L_M m_{j,a}) (\xi,\tau ) d\xi d\tau
\\
&= \iint e^{2\pi i (t\tau +u \xi) } ) \Big( ( 1+N^{2j} D_\xi^2)^M m_{j,a}\Big) (\xi,\tau ) d\xi d\tau.
\end{align*}
Hence
$$
\big|K_{j,a}(x,t)\big| \leq (1+ N^{2j}|u|^2 )^{-M} I_{M,N}(j),
$$
where
\begin{align*}
I_{M,N} (j) &= \iint \Big|  \Big( ( 1+N^{2j} D_\xi^2)^M m_{j,a}\Big) (\xi,\tau ) \Big| d\xi d\tau
\\
& \leq  \sum_{0\leq n_1+n_2\leq 2M} 
\iint  (N^j D_\xi)^{n_1}   \frac{ \langle a\xi - \tau \rangle^{\gamma} }{N^{j\gamma}}  
\cdot (N^j D_\xi)^{n_2} \lambda(\xi,\tau)
d\xi d\tau.
\end{align*}
We need to prove that $I_{M,N}\lesssim_{M,N} N^j$. 
Recall that $m_{j,a}$ is supported in $\half N^j \leq |\xi| \leq 4N^{j+2}$, so that the integration in $I_{M,N}$ is restricted
to the same region. Hence it suffices to prove that the
integrands are bounded by $C_{M,N} (1+|\tau|)^{-M}$ with constants independent of $j$. This follows from
the estimates (\ref{garbage-e2}) and (\ref{garbage-e1}) below.

\begin{itemize}
\item We have $\lambda_{j,a}(\xi,\tau)
= \int  \tsigma( y)   \, \widehat{\rho} (N^{-j} \xi  y+ \tau) dy$, so that for $n_2\geq 0$ 
$$
(N^{j} D_\xi)^{n_2} \lambda_{j,a}(\xi,\tau)
=  \int   \tsigma( y)  y^{n_2}  \,
(D^{n_2} \widehat{\rho} \,) (N^{-j} \xi  y+ \tau) dy.
$$
For a fixed $\xi$, this integral as a function of $\tau$ is a convolution of two Schwartz functions whose
Schwartz seminorms are bounded uniformly in $j\in\NN$ and $\xi$ in the range
$|N^{-j} \xi|\sim_N 1$. Hence for any $M'>0$ we have
\begin{equation}\label{garbage-e2}
| (N^{j} D_\xi)^{n_2} \lambda_{j,a}(\xi,\tau)
\lesssim_{n_2,N,M'}  (1+|\tau|)^{-M'}.
\end{equation}

\medskip
\item 
We claim that for $n_1\geq 0$ and $\xi$ in the indicated range,
\begin{equation}\label{garbage-e1}
(N^j D_\xi)^{n_1}   \frac{ \langle a\xi - \tau  \rangle^{\gamma} }{N^{j\gamma}}
\lesssim_{n_1,N} \langle \tau \rangle.
\end{equation}
Indeed, we have 
$$(N^j D_\xi)^{m_1}   \frac{ \langle  a\xi - \tau \rangle^{\gamma} }{N^{j\gamma}}
\lesssim  \frac{ \langle  a\xi - \tau  \rangle^{\gamma-m_1} }{ N^{- j(\gamma-m_1)}  }.
$$
If $m_1=0$, we write
$$
\frac{ \langle  a\xi  - \tau \rangle }{N^{j}} 
\leq  \frac{ \langle \tau  \rangle }{N^{j}} + \frac{ \langle a\xi  \rangle }{N^{j}}
\lesssim \langle \tau  \rangle + O(N^2) \lesssim_N \langle \tau  \rangle.
$$
and the claim follows.
If $n_1\geq 1$, the exponent $\gamma-n_1$ is
negative, so we need the estimate 
\begin{equation}\label{bam2}
N^j \langle a\xi - \tau \rangle^{-1} \lesssim 1+|\tau|.
\end{equation}
If $|\tau|\leq \frac{1}{4} N^j$, then for $|\xi|\geq \frac{1}{2}N^j $
we have
$$
N^{-j} \langle a\xi  - \tau \rangle \geq  N^{-j} | a\xi - \tau  | \geq N^{-j} \Big( \frac{1}{2}N^j  - \frac{1}{4} N^j \Big)
= \frac{1}{4},
$$
so that $N^j \langle \tau-  a\xi  \rangle^{-1} \lesssim 1$. 
If on the other hand $|\tau|\geq \frac{1}{4} N^j$, then
$$
N^{j} \langle  a\xi - \tau  \rangle^{-1} \leq  N^j \lesssim |\tau|
$$
and the claim again is proved.

\end{itemize}

The proof of (\ref{k-decay-t}) is similar, except that instead of $L_M$ we use 
$L'_M=(1+ |t|^2 )^{-M} (1+ D_\tau^2)^M $ with $L_M e^{2\pi i t\tau } = e^{2\pi i t\tau }$
and integrate by parts in $\tau$. The details are omitted. 

\end{proof}

\begin{corollary}\label{cor-falp}
For $f\in\cals(\RR)$, we have the estimate
\begin{equation}\label{falp}
\|F_{j,a}f\|_{L^p(dxdt)} \lesssim_N N^{j(\gamma-\alpha)} \|f\|_{L^p(dx)}.
\end{equation}
\end{corollary}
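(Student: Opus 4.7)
The plan is to exploit the integral operator representation
$$F_{j,a}f(x,t) = N^{j(\gamma-\alpha)} \int f(y)\, K_{j,a}(x-y,t)\, dy$$
from (\ref{fajfa-2}) together with the pointwise decay estimates from Proposition \ref{prop-k-decay}. For each fixed $t$, this is a convolution in $x$, so Young's inequality gives
$$\|F_{j,a}f(\cdot,t)\|_{L^p(dx)} \leq N^{j(\gamma-\alpha)}\, \|K_{j,a}(\cdot,t)\|_{L^1(dx)}\, \|f\|_{L^p(dx)}.$$
Taking $L^p$ norms in $t$ then reduces the problem to showing that $\|K_{j,a}(\cdot,t)\|_{L^1(dx)}$ is bounded by an $L^p(dt)$ function, uniformly in $j$ and $a$.

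Next I would exploit both halves of the min in Proposition \ref{prop-k-decay}. Using the elementary inequality $\min(A,B) \leq A^{1/2} B^{1/2}$ for $A,B \geq 0$, the bounds (\ref{k-decay}) and (\ref{k-decay-t}) combine into
$$|K_{j,a}(x,t)| \leq C_{M,N}\, N^j\, (1+N^{2j}|x-ta|^2)^{-M/2}\, (1+|t|^2)^{-M/2}$$
for any $M \in \mathbb{N}$. Changing variable $u = N^j(x-ta)$ in the $x$-integral immediately gives
$$\|K_{j,a}(\cdot,t)\|_{L^1(dx)} \leq C_{M,N}'\, (1+|t|^2)^{-M/2}$$
uniformly in $j$ and $a$, provided $M \geq 2$ so that $\int (1+u^2)^{-M/2}\,du < \infty$.

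Putting these together,
$$\|F_{j,a}f\|_{L^p(dxdt)} \leq C_{M,N}'\, N^{j(\gamma-\alpha)}\, \|f\|_{L^p(dx)} \left(\int (1+|t|^2)^{-Mp/2}\, dt\right)^{1/p},$$
and choosing $M$ large enough that $Mp > 1$ makes the $t$-integral finite, completing the argument. I do not expect any substantial obstacle: once Proposition \ref{prop-k-decay} is in hand, the rest is a routine Young/Minkowski step. The only mild subtlety worth flagging is that $F_{j,a}f$ is \emph{not} compactly supported in $t$ (since $\langle D_t\rangle^\gamma$ destroys the compact $t$-support that $\rho$ would otherwise provide), which is precisely why the second estimate (\ref{k-decay-t}) is needed in addition to the spatial bound (\ref{k-decay}).
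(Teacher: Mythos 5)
Your proof is correct and follows essentially the same route as the paper's: take $L^p(dx)$ norm for fixed $t$ via Young's inequality, then use both decay estimates from Proposition \ref{prop-k-decay} to get a uniform $L^1(dx)$ bound on the kernel with integrable $t$-decay. The paper simply compresses the final kernel-decay step (the $\min(A,B)\leq A^{1/2}B^{1/2}$ trick and the change of variables) into the phrase ``this is an easy consequence of (\ref{k-decay}) and (\ref{k-decay-t}),'' which you have spelled out in full.
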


\begin{proof}
We write $\|F_{j,a}f\|_{L^p(dxdt)}= \big\| \,\|F_{j,a} f(\cdot, t)\|_{L^p(dx)} \big\|_{L^p(dt)}$. Writing out
$F_{j,a}f$ as in (\ref{fajfa-2}), we see that it suffices to prove an estimate of the form
$$
\left\| \int  f(y)  {K}_{j,a}(x-y,t) dy\right\|_{L^p(dx)} \lesssim_N (1+|t|)^{-2} \|f\|_p.
$$
By Young's inequality, it suffices to prove that
$$
\int   |{K}_{j,a}(x,t) | dx \lesssim_N (1+|t|)^{-2} .
$$
But this is an easy consequence of (\ref{k-decay}) and (\ref{k-decay-t}).

\end{proof}

In the next proposition, 
we let $\phi \in C_c^\infty([-2,2])$ be a function such that $0\leq\phi \leq 1$ and $\phi  (\tau)=1$ for $|\tau|\leq 1$.
This can be the same function that we used to define the cut-offs in $|\xi|$.
The small number $\epsilon>0$ will be fixed later.

\begin{proposition}\label{fourier-local}
Let
\begin{align*}
&m_{j,a}^{\rm main} (\xi,\tau)= m_{j,a}(\xi,\tau) \phi(N^{-j\epsilon} \tau),
\\
&K_{j,a}^{\rm main}(x,t)=\iint e^{2\pi i ((x-at) \xi + t\tau )} m_{j,a}^{\rm main} (\xi,\tau) d\xi d\tau,
\\
&F_{j,a}^{\rm main} f (x,t)= N^{j(\gamma-\alpha)}
 \int  f(y)  K_{j,a}^{\rm main} (x-y,t) dy,\ \ f\in\cals(\RR).
\end{align*}
Then for all $1\leq p \leq \infty$ and $M''\in\NN$ we have
\begin{equation}\label{f-main}
\| F_{j,a}f  - F_{j,a}^{\rm main} f \|_{L^p(\RR^2) }
\lesssim_{N,M''} N^{-j\epsilon M''} \|f\|_p,
\end{equation}
where the implicit constant may depend on $p$, $\epsilon$, $N$, and $M''$, but not on $j$.
\end{proposition}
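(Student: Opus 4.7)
The plan is to recognize that the difference kernel $K_{j,a} - K_{j,a}^{\rm main}$ is the Fourier integral of $m_{j,a}(\xi,\tau)(1-\phi(N^{-j\epsilon}\tau))$, whose multiplier is now supported in $\{|\xi|\sim N^j\} \times \{|\tau|\geq N^{j\epsilon}\}$. I would then repeat the integration-by-parts argument of Proposition \ref{prop-k-decay} and extract an additional gain of $N^{-j\epsilon M''}$ from the Schwartz decay of the integrand in the enlarged region $|\tau|\geq N^{j\epsilon}$.

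Concretely, I would first integrate by parts $M$ times in $\xi$ using the operator $L_M$ from the proof of Proposition \ref{prop-k-decay}. Derivatives fall only on $m_{j,a}$, since the cutoff $1-\phi(N^{-j\epsilon}\tau)$ has no $\xi$-dependence, and the same pointwise bounds (\ref{garbage-e1})--(\ref{garbage-e2}) used before control the resulting integrand by $C_{N,M}\langle\tau\rangle^C h_M(\tau)$ on $|\xi|\sim N^j$, with $h_M$ a Schwartz function in $\tau$ whose seminorms are uniform in $\xi$ and $j$. Integration over $|\xi|\sim N^j$ costs a factor of $N^j$, and integration over $|\tau|\geq N^{j\epsilon}$ now gains an arbitrary power $N^{-j\epsilon M''}$ from the Schwartz tail, simply by allocating a few more derivatives of decay to that range of $\tau$. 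The parallel argument with $L_M'=(1+|t|^2)^{-M}(1+D_\tau^2)^M$, integrating by parts in $\tau$, produces the analogous bound with $(1+|t|^2)^{-M}$ replacing $(1+N^{2j}|x-at|^2)^{-M}$; derivatives that land on $1-\phi(N^{-j\epsilon}\tau)$ only help, since each contributes a further factor of $N^{-j\epsilon}$. Combining,
$$
|K_{j,a}(x,t)-K_{j,a}^{\rm main}(x,t)| \lesssim_{N,M,M''} N^{-j\epsilon M''} N^j \min\bigl[(1+N^{2j}|x-at|^2)^{-M},(1+|t|^2)^{-M}\bigr].
$$

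From this one obtains $\|K_{j,a}(\cdot,t) - K_{j,a}^{\rm main}(\cdot,t)\|_{L^1(dx)} \lesssim_N N^{-j\epsilon M''}(1+|t|^2)^{-1}$ by choosing $M$ just large enough to make the spatial profile integrable. Young's inequality in $x$ followed by $L^p$ in $t$, exactly as in the proof of Corollary \ref{cor-falp}, then gives
$$
\|F_{j,a}f - F_{j,a}^{\rm main}f\|_{L^p(\RR^2)} \lesssim_{N,M''} N^{j(\gamma-\alpha)} N^{-j\epsilon M''} \|f\|_p.
$$
Since $M''$ is arbitrary and we are free to replace $M''$ by $M''+C$ for any $N$-dependent constant, the harmless factor $N^{j(\gamma-\alpha)}$ is absorbed, yielding the stated bound. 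I do not anticipate a serious obstacle here: the mechanism is identical to Proposition \ref{prop-k-decay}, and the only new ingredient is the trivial observation that restricting to $|\tau|\geq N^{j\epsilon}$ converts the Schwartz decay of $\widehat{\rho}(N^{-j}\xi y + \tau)$ in $\tau$ into an arbitrary power saving in $N^{j\epsilon}$.
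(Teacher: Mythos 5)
Your argument is essentially the same as the paper's: the paper likewise reduces to the pointwise bound (\ref{k-decay-tail}), obtains it by the same integration-by-parts scheme as Proposition \ref{prop-k-decay}, and extracts the extra factor $N^{-j\epsilon M''}$ from the support condition $|\tau|\geq N^{j\epsilon}$ together with the Schwartz decay in (\ref{garbage-e2}); the only addition in your write-up is that you explicitly note the harmless factor $N^{j(\gamma-\alpha)}$ can be absorbed by enlarging $M''$, which the paper leaves implicit. This is correct.
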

\medskip

\noindent{\bf Remark.}
In the original (independent of $a$) Fourier coordinates $(\xi,s)$, we have 
$$
F_{j,a}^{\rm main} f  = \mathcal{F}^{-1} \left[ \tilde{m}_{j,a}^{\rm main}(\xi,s) \widehat{f} (\xi) \right],
$$
where $\tilde{m}_{j,a}^{\rm main}(\xi,s)= N^{j(\gamma-\alpha)} m_{j,a}^{\rm main} (\xi,s+a\xi)$
is supported in the set
\begin{align*}
\mathcal{K}_{j,a}^\epsilon 
&= \left\{ (\xi,s)\in\RR^2:\ \   
\frac{1}{2}N^j \leq |\xi| \leq 4 N^{j+2},\ \ |s + a \xi | \leq 2N^{j\epsilon} \right\}.
\end{align*}

\medskip

\begin{proof}
The remark after the proposition follows immediately upon changing coordinates. We now prove the proposition,
We have
$$
F_{j,a}f  - F_{j,a}^{\rm main} f (x,t) = N^{j(\gamma-\alpha)}
 \int  f(y)  (K_{j,a}- K_{j,a}^{\rm main} )(x-y,t) dy.
$$
As in Corollary \ref{cor-falp}, it suffices to prove that
$$
 \int  |   (K_{j,a}- K_{j,a}^{\rm main} )(x,t)| dx \lesssim_{N,M''} N^{-j\epsilon M''} (1+|t|)^{-2}.
$$
This in turn follows from estimates analogous to (\ref{k-decay}) and (\ref{k-decay-t}), namely
\begin{equation}\label{k-decay-tail}
\big|   (K_{j,a}- K_{j,a}^{\rm main} )(x,t)\big| 
\leq C_{M,M'',N} N^{-j\epsilon M''} 
\Big(1+ N^{2j}|x- ta|^2 + |t|^2 \Big)^{-M}.
\end{equation}
with $C_{M,M'',N}$ independent of $j$ and $a$.  
To prove this, we proceed as in the proof of  (\ref{k-decay}) and (\ref{k-decay-t}), with the following modifications.
We have
\begin{equation*}
(K_{j,a}- K_{j,a}^{\rm main} )(x,t)
= \iint e^{2\pi i ((x-at) \xi + t\tau )} \left[ m_{j,a}(\xi,\tau)- m_{j,a}^{\rm main} (\xi,\tau) \right] d\xi d\tau,
\end{equation*}
with 
$$
m_{j,a}(\xi,\tau)- m_{j,a}^{\rm main} (\xi,\tau)
=  \frac{ \langle a\xi - \tau \rangle^{\gamma} }{N^{j\gamma}}  |\lambda_j(\xi,\tau)| 
\Big(1- \phi(N^{-j\epsilon} \tau)\Big) .
$$
We integrate by parts as in the proof of  (\ref{k-decay}) and (\ref{k-decay-t}), but also use that 
$1- \phi(N^{-j\epsilon} \tau)$ is supported in $|\tau|\geq N^{j\epsilon}$, so that we can separate out
factors 
$(1+|\tau|)^{-M''}\lesssim N^{-j\epsilon M''}$ from the estimate (\ref{garbage-e2}) and from the analogous 
estimate for $D_\tau$ before proceeding with the rest of the argument.

\end{proof}


\section{Decoupling for the Cantor bush}\label{sec-decoupling}


\subsection{Preliminaries}
We will need to develop decoupling inequalities for functions with Fourier support contained in a neighbourhood
of the Cantor bush $\bigcup_{a\in A_j } \mathcal{K}_{j,a}^\epsilon$, with $\mathcal{K}_{j,a}^\epsilon$
defined in the remark after Proposition \ref{fourier-local}.
We use parts of the decoupling machinery developed by Bourgain and Demeter \cite{BD2015}, \cite{BD-expo}. The notation below will follow the conventions of \cite{BD-expo}, with minor modifications.
We will also rely on a 1-dimensional Cantor decoupling inequality
proved in \cite{Laba-Wang}.

For $L>0$, an {\it $L$-interval} in $\RR$ will be an
interval of length $L$ with endpoints in $L\ZZ$. If a coordinate system in $\RR^2$ is given, 
an {\it $L$-square}  will be a $2$-dimensional square of side length $L$, with vertices in $L\ZZ^2$ and sides parallel to the coordinate axes. We will often use $L=N^k$ with $k\in\ZZ$; in that case, any $N^k$-squate $Q$ and any $N^{k'}$-square $Q'$ 
in the same coordinate system
are either nested or disjoint except possibly for an edge or vertex. Unless stated otherwise, we will assume all $L$-squares to be closed. 

Note that the definition above relies on a fixed choice of a coordinate system. 
In the inductive arguments below, we will use many coordinate systems corresponding to different
portions of the Cantor set. 
We will say that two such coordinate systems are {\it compatible} if a $1$-square in one coordinate system can be covered by $O(1)$ 
1-squares in the other coordinate system, and vice versa, with the $O(1)$ constants independent of $N,j,k$.

We will use local weights in 1 and 2 dimensions. 
If $R$ is the rectangle $\{(x,t):\ |x-x_0|\leq r_x,\ |t-t_0|\leq r_t\}$, we define
\begin{equation}\label{def-weight}
w_R(x,t)=\left(1+{\sqrt{\Big(\frac{x-x_0}{r_x}\Big)^2+\Big(\frac{t-t_0}{r_t}\Big)^2} }\right)^{-100}
\end{equation}
and, for a locally integrable function $g:\RR^2\to \CC$,
$$
\|g\|_{L^p (w_R)}=\left(  \int |g|^p w_R\right)^{1/p}.
$$
In dimension 1, if $I$ is the interval $x_0-r\leq x \leq x_0+r$, we define
$$
w_I(x)=\left(1+\frac{|x-x_0|}{r}\right)^{-1000},
$$
and
$\|g\|_{L^p (w_I)}$ is defined similarly. Typically, $R$ and $I$ will be $N^k$-squares and 
intervals. We will use $Q,R, S$ for squares and $I,J$ for intervals;
this will also indicate whether the associated weight $w$ is taken in 1 or 2 dimensions. 


We will use repeatedly the following covering argument (cf. \cite[Lemma 4.1]{BD-expo}).

\begin{lemma}\label{lemma-covering}
Let $\RR^2=\bigcup_{Q\in\mathcal{Q}} Q$ be a covering of the plane by $L$-squares associated with some
coordinate system. Then we have the following estimates, with the implicit constants independent of $L$.

\medskip
(a) $\sum_{Q\in\mathcal{Q}} w_Q \sim 1$

\medskip
(b) $\min_{x\in Q} w_Q(x) \sim \max_{x\in Q} w_Q(x)$

\medskip
(c) Let $\RR^2=\bigcup_{R\in\mathcal{R}} R$ be a covering of the plane by $L'$-squares in a 
possibly different but compatible coordinate system, with $L'\sim L$. Suppose that $\{g_i\}_{i\in\mathcal{I}}$ is a finite family of
functions such that for $g=\sum g_i$, and for every $R\in\mathcal{R}$, we have the estimate
$$
\|g\|_{L^p(R)} \leq K \Big( \sum_i \|g_i\|^2_{L^p(w_R)} \Big)^{1/2}.
$$
Then we also have
$$
\|g\|_{L^p(w_Q)} \lesssim K \Big( \sum_i \|g_i\|^2_{L^p(w_Q)} \Big)^{1/2}
$$
for all $Q\in\mathcal{Q}$.
\end{lemma}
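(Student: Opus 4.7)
The plan is to dispatch parts (a) and (b) by direct geometric inspection of the weight and then use them, together with a Minkowski-type rearrangement, to prove (c).

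For (b), any point $(x,t)\in Q$ satisfies $\max\bigl(|x-x_0|/r_x,\,|t-t_0|/r_t\bigr)\leq 1$, so the argument of the $(1+\cdot)^{-100}$ factor in (\ref{def-weight}) lies in $[0,\sqrt{2}]$, pinching $w_Q$ between two absolute positive constants on $Q$. For (a), fix $(x,t)$: the square $Q_0$ containing $(x,t)$ gives $w_{Q_0}(x,t)\gtrsim 1$ by (b), so $\sum_Q w_Q(x,t)\gtrsim 1$. For the upper bound, group the $Q$'s into concentric shells around $(x,t)$ in the relevant lattice of centers; the $k$-th shell contains $O(k)$ squares with $w_Q(x,t)\lesssim (1+k)^{-100}$, so $\sum_Q w_Q(x,t)\lesssim\sum_k k(1+k)^{-100}<\infty$ (the decay rate $100$ is far larger than the ambient dimension $2$).

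For the main part (c), I would argue as follows. Let $c_R$ denote the center of $R$. Because a compatible $L'$-square $R$ has diameter $\sim L'\sim L$, the weight $w_Q$ varies by at most a multiplicative constant across $R$, so $\int_R|g|^p w_Q\lesssim w_Q(c_R)\,\|g\|^p_{L^p(R)}$. Summing over $R$ and invoking the hypothesis on each $R$,
\begin{equation*}
\|g\|^p_{L^p(w_Q)}\lesssim K^p\sum_R w_Q(c_R)\Bigl(\sum_i\|g_i\|^2_{L^p(w_R)}\Bigr)^{p/2}.
\end{equation*}
Since $p\geq 2$, Minkowski's inequality in $\ell^{p/2}$ lets me interchange the $R$ and $i$ sums, reducing matters to the pointwise bound
\begin{equation*}
\sum_R w_Q(c_R)\,w_R(x,t)\lesssim w_Q(x,t),
\end{equation*}
after which integration against $|g_i|^p$ gives $\sum_R w_Q(c_R)\|g_i\|^p_{L^p(w_R)}\lesssim\|g_i\|^p_{L^p(w_Q)}$ and closes the argument by taking $p$-th roots.

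The central technical step is the pointwise weight-summation estimate displayed above. I would split the $R$-sum at the threshold $|c_R-c_Q|\leq |(x,t)-c_Q|/2$: on the near portion, the triangle inequality gives $w_R(x,t)\lesssim w_Q(x,t)$ (since $(x,t)$ is then far from $c_R$) and the residual $\sum_R w_Q(c_R)$ converges by the same shell count used in (a); on the far portion, $w_Q(c_R)\lesssim w_Q(x,t)$ by construction and $\sum_R w_R(x,t)=O(1)$ similarly. The compatibility hypothesis on the two coordinate systems enters precisely here, to ensure that all the distance comparisons and shell counts carry $O(1)$ constants independent of $N$, $L$, and $L'$. No serious obstacle arises beyond bookkeeping.
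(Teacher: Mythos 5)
Your argument is correct and follows the same strategy as the paper: establish the pointwise comparison $\sum_R w_Q(c_R)\,w_R \lesssim w_Q$, exploit the near-constancy of $w_Q$ on each $R$, and then interchange the $R$ and $i$ sums via Minkowski's inequality in $\ell^{p/2}$ (valid since $p\geq 2$). Your near/far split in the weight-summation step is in fact more careful than the paper's one-line appeal to (a) and (b) — the claimed bound $\max_{y\in R}w_Q(y)\,w_R(x)\lesssim w_Q(x)\,w_R(x)$ fails for individual $R$ lying between $x$ and the center of $Q$, and the dyadic split you describe is exactly what makes the summed estimate go through; the shell-count argument for (a) and the bounded-oscillation argument for (b) likewise match the paper's intent, which leaves both as ``clear from the definition.''
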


\begin{proof}
Parts (a) and (b) are clear from the definition of $w_Q$. We now prove (c). For a given $Q$, let 
$c_R=\max_{x\in R}w_Q(x)$. We claim that
\begin{equation}\label{chain}
w_Q \leq \sum_R c_R \one_R \lesssim \sum_R c_R w_R \lesssim w_Q.
\end{equation}
The first two inequalities in (\ref{chain}) are clear; we need to verify the last one.
Using (b) and then (a), we have
$$
\sum_R c_R w_R(x)  = \sum_R \max_{y\in R} w_Q(y) \cdot w_R(x) 
\lesssim \sum_R w_Q(x) w_R(x) \lesssim w_Q(x),
$$
as required.

With $c_R$ as above, we write
\begin{align*}
\|g\|^2_{L^p(w_Q)} & = \left[ \int |g|^p w_Q \right]^{2/p}
\lesssim  \left[ \sum_R c_R \int_R |g|^p  \right]^{2/p}
\\
& =  \left[ \sum_R c_R \|g\|_{L^p(R)}^p  \right]^{2/p}
\\
& \lesssim K^2  \left[ \sum_R \Big( c_R^{2/p} \sum_i \|g_i \|_{L^p(w_R)}^2 \Big)^{p/2}  \right]^{2/p}
\\
& \lesssim K^2  \sum_i \left[ \sum_R \Big( c_R^{2/p}  \|g_i \|_{L^p(w_R)}^2 \Big)^{p/2}  \right]^{2/p},
\end{align*}
where at the last step we used Minkowski's inequality in $\ell^{p/2}(\mathcal{R})$. Continuing the calculation
and using (\ref{chain}) at the end, we get
\begin{align*}
\|g\|^2_{L^p(w_Q)}
& \lesssim  K^2 \sum_i \Big[ \sum_R  c_R  \|g_i \|_{L^p(w_R)}^p  \Big]^{2/p}
\\
&= K^2 \sum_i \Big[ \int \sum_R  |g_i|^p c_R w_R  \Big]^{2/p}
\\
&\lesssim  K^2 \sum_i \Big[ \int   |g_i|^p w_Q  \Big]^{2/p} = K^2 \sum_i \|g_i\|^2_{L^p(w_Q)}
\end{align*}
as claimed.

\end{proof}


\subsection{Decoupling for Cantor strips}

Let $S\subset[N-1]$ be a $\Lambda(p)$ set obeying (\ref{lambda-p-0}) and (\ref{largeA}), with $N$ sufficiently large to be determined later.
Our basic tool, borrowed from \cite[Lemma5]{Laba-Wang}, is the
following single-scale decoupling inequality which follows from Bourgain's $\Lambda(p)$ estimate. We will need
a slightly modified version with intervals of length $2$ instead of 1. This is easy to arrange using a partition of unity,
cf. the remark before Lemma 5 in \cite{Laba-Wang}.

\begin{lemma}\label{dtc-lemma} 
With $S$ as above,
let $E=S+[0,2]$, and let $h:\RR\to\CC$ be a locally integrable function with $\widehat{h}$ supported on 
$E$. Let $h=\sum_{a\in S} h_a$, where $\widehat{h_a}$ is supported on $a+[0,2]$. Then for $2\leq p \leq p_0$ we have
\begin{equation}\label{decoupling}
\|h\|^2_{L^p(w_I)}\leq C_1^2 \sum_{a\in S} \|h_a\|^2_{L^p(w_I)}
\end{equation}
for any 1-interval $I$, where $C_1$ depends on $p$ but not on $N$ or $h$.
\end{lemma}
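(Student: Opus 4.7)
My plan is to reduce the length-$2$ case to the length-$1$ case, which is \cite[Lemma 5]{Laba-Wang}, via a fixed partition of unity on $[0,2]$. The length-$1$ version, in which $\widehat{h_a}$ is supported in $a+[0,1]$ rather than $a+[0,2]$, is proved in \cite{Laba-Wang} from Bourgain's discrete $\Lambda(p)$ inequality (Theorem~\ref{bourgain-thm}) by localizing $h$ to a weighted window through a smooth cutoff, expanding in a Fourier series adapted to $I$, applying the discrete $\Lambda(p)$ bound to each ``slice'' in the Fourier-dual variable, and then integrating out that slice parameter. I will use this length-$1$ decoupling as a black box.

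The main construction will be a fixed partition of unity. I would choose a small $\epsilon>0$ and $\psi_1,\psi_2\in C_c^\infty(\RR)$ with $\psi_1+\psi_2\equiv 1$ on $[0,2]$, $\supp \psi_1\subset[-\epsilon,1+\epsilon]$, and $\supp \psi_2\subset[1-\epsilon,2+\epsilon]$. I then define $h_a^{(i)}$ by $\widehat{h_a^{(i)}}(\xi):=\psi_i(\xi-a)\widehat{h_a}(\xi)$, so that $h_a=h_a^{(1)}+h_a^{(2)}$ and each $\widehat{h_a^{(i)}}$ is supported in a length-$(1+2\epsilon)$ translate of a fixed interval centered near $a$ or $a+1$. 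Because $\psi_i$ is a fixed Schwartz function independent of $N$, convolution in $x$ with the kernel $\check\psi_i$ is bounded on $L^p(w_I)$, which yields the pointwise-in-$a$ control $\|h_a^{(i)}\|_{L^p(w_I)} \lesssim \|h_a\|_{L^p(w_I)}$.

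For each $i\in\{1,2\}$, the family $\{h_a^{(i)}\}_{a\in S}$ then has Fourier supports inside integer-indexed translates of a fixed interval of length $1+2\epsilon$. After a harmless fixed-scale dilation in $x$, which preserves the $\Lambda(p)$ property of $S$ with the same constant and replaces $w_I$ by a comparable weight, this falls into the setting covered by the length-$1$ lemma of \cite{Laba-Wang}, so that
\[
\Big\|\sum_{a\in S}h_a^{(i)}\Big\|_{L^p(w_I)}^2 \le C^2 \sum_{a\in S}\|h_a^{(i)}\|_{L^p(w_I)}^2 .
\]
Summing the two pieces $i=1,2$ by the triangle inequality in $L^p(w_I)$, and absorbing $\|h_a^{(i)}\|_{L^p(w_I)}\lesssim\|h_a\|_{L^p(w_I)}$, gives (\ref{decoupling}) with a constant $C_1$ depending only on $p$.

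The only real obstacle is verifying that the mild overlap of Fourier supports introduced by the partition of unity (length $1+2\epsilon$ rather than $1$) is absorbed by the length-$1$ proof. This is precisely the flexibility flagged in the ``remark before Lemma 5'' of \cite{Laba-Wang}, so in practice the step is a citation rather than new work; everything else is routine bookkeeping with the weights $w_I$.
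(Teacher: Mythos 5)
Your proof is correct and is essentially the argument the paper intends: the paper itself does not prove Lemma~\ref{dtc-lemma} but simply cites \cite[Lemma 5]{Laba-Wang} together with the remark preceding it, saying the passage from length-$1$ to length-$2$ Fourier supports ``is easy to arrange using a partition of unity,'' which is exactly what you carry out. The one point worth being slightly more careful about is your ``harmless fixed-scale dilation'' — dilating by $(1+2\epsilon)^{-1}$ sends $S$ to a set of non-integers, so it does not literally land in the integer-translate hypotheses of the cited lemma; the cleaner justification is that the localization-and-Fourier-series proof of \cite[Lemma 5]{Laba-Wang} is manifestly stable under enlarging the supports from $a+[0,1]$ to $a+[-\epsilon,1+\epsilon]$ (with $S$ replaced by $S$ and $S+1$, both $\Lambda(p)$ with the same constant), which is precisely what the remark there records.
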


We need to extend the estimate (\ref{decoupling}) to 2-dimensional product sets consisting of parallel strips
corresponding to the Cantor intervals.

\begin{lemma}\label{dtc-lemma2} 
With $E\subset \RR$ defined above and $L_1<L_2$,
let $h:\RR^2\to\CC$ be a locally integrable function such that $\widehat{h}$ is supported on 
$E\times [L_1,L_2]$. Assume that $h=\sum_{a\in S}h_a$, lwhere $\widehat{h_a}$ is supported on $[a,a+2]\times[L_1,L_2]$. Then 
\begin{equation}\label{decoupling2}
\|h\|^2_{L^p(w_Q)} \lesssim C_1^2 \sum_{a\in S} \|h_a\|^2_{L^p(w_Q)}
\end{equation}
for any 1-square $Q$.
\end{lemma}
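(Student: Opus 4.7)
The plan is to reduce the two-dimensional decoupling to a slice-wise application of the one-dimensional Lemma \ref{dtc-lemma}, then upgrade to weighted $L^p$-balls via the covering machinery of Lemma \ref{lemma-covering}(c). The crucial feature enabling the reduction is that the Fourier support $E\times[L_1,L_2]$ is a product set, so for each fixed $t$ the partial Fourier transform $\mathcal{F}_{x\to\xi}h(\xi,t)$ is supported in $E$, and $\mathcal{F}_{x\to\xi}h_a(\xi,t)$ is supported in $[a,a+2]$, uniformly in $t$.

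First I would fix an arbitrary 1-square $R=I_x\times I_t$ and, for each $t\in\RR$, apply Lemma \ref{dtc-lemma} on the horizontal slice to obtain
\begin{equation*}
\|h(\cdot,t)\|_{L^p(w_{I_x})}^2\leq C_1^2\sum_{a\in S}\|h_a(\cdot,t)\|_{L^p(w_{I_x})}^2.
\end{equation*}
Raising to the $p/2$ power (here I use $p\geq 2$) and integrating in $t$ over $I_t$, then applying Minkowski's inequality in $\ell^{p/2}(S)$ to pull the sum outside, produces
\begin{equation*}
\|h\|_{L^p(R)}^p\leq C_1^p\Big(\sum_{a}\Big(\int_{I_t}\|h_a(\cdot,t)\|_{L^p(w_{I_x})}^p\,dt\Big)^{2/p}\Big)^{p/2}.
\end{equation*}

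Next I would carry out the weight comparison $w_{I_x}(x)\mathbf{1}_{I_t}(t)\lesssim w_R(x,t)$, which holds because the one-dimensional weight decays like $(1+|x-x_0|)^{-1000}$ while the two-dimensional $w_R$ decays only like $(1+\sqrt{(x-x_0)^2+(t-t_0)^2})^{-100}$, and the constraint $t\in I_t$ keeps $|t-t_0|$ bounded. This converts the inner integral to $\|h_a\|_{L^p(w_R)}^p$, yielding the unweighted-$R$ version
\begin{equation*}
\|h\|_{L^p(R)}^2\lesssim C_1^2\sum_{a\in S}\|h_a\|_{L^p(w_R)}^2.
\end{equation*}

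Finally, I would tile $\RR^2$ by a family of 1-squares $\{R\}$ in a coordinate system compatible with $Q$ and invoke Lemma \ref{lemma-covering}(c) directly, with $g=h$ and $g_i=h_a$, to promote the pointwise-on-$R$ estimate to the desired $\|h\|_{L^p(w_Q)}^2\lesssim C_1^2\sum_a\|h_a\|_{L^p(w_Q)}^2$. The main obstacle I anticipate is the weight bookkeeping: the two-dimensional Gaussian-like weight $w_R$ does not factor exactly as a product of one-dimensional weights, so some care is needed to choose the correct exponents (the 1000 vs.\ 100 disparity in the definition of $w_I$ and $w_R$ is precisely what makes the comparison $w_{I_x}\mathbf{1}_{I_t}\lesssim w_R$ work). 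Verifying that the Fourier-support hypothesis of Lemma \ref{dtc-lemma} truly holds slicewise and uniformly in $t$ is routine once one exploits the product structure of $E\times[L_1,L_2]$, but it must be stated carefully since Fubini is being interleaved with a decoupling inequality.
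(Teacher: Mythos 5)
Your proposal is correct and follows essentially the same route as the paper's proof: apply Lemma \ref{dtc-lemma} slicewise in $x$ for fixed $t$ (using the product structure of the Fourier support), integrate in $t$ and use Minkowski's inequality in $\ell^{p/2}(S)$, then invoke Lemma \ref{lemma-covering}(c) to pass from the unweighted estimate on 1-squares $R$ to the weighted estimate on a general 1-square $Q$. The only cosmetic difference is that you spell out the weight comparison $w_{I_x}(x)\mathbf{1}_{I_t}(t)\lesssim w_R(x,t)$ explicitly before appealing to Lemma \ref{lemma-covering}(c), whereas the paper keeps $w_I$ on both sides of its intermediate inequality and lets the covering lemma absorb the weight bookkeeping; both are fine.
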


\begin{proof}
We will prove that for any $t_1<t_2$, and for any $1$-interval $I$, we have
\begin{equation}\label{parallel1}
\int_{t_1}^{t_2} \int |h(x,t)|^p w_I(x) \,dx\,dt\leq C_1^2 \left[ \sum_{a\in S} \Big( \int_{t_1}^{t_2} \int |h_a(x,t)|^p w_I(x)\,dx\, dt \Big)^{2/p} \right]^{1/2}.
\end{equation}
Then (\ref{decoupling2}) follows from Lemma \ref{lemma-covering} (c).

Consider the function $h^{(t)}(x):=h(x,t)$ as a function of $x$, with $t$ fixed. We have
\begin{align*}
h(x,t)&= \int \Big[ \int_{L_1}^{L_2} \widehat{h}(\xi,s)e^{2\pi i ts}ds \Big] e^{2\pi i \xi x} d\xi,
\end{align*}
hence $\widehat{h^{(t)}}(\xi) = \int_{L_1}^{L_2} \widehat{h}(\xi,s)e^{2\pi i ts}ds $ is supported on $E$, and
satisfies the assumptions of Lemma \ref{dtc-lemma} with $(h^{(t)})_a(x)=h_a(x,t)$. 
By (\ref{decoupling}), we have
\begin{equation}\label{dtc-3}
\|h(x,t)\|^2_{L^p(w_I(x))}\leq C^2 \sum_{a\in S} \|h_a(x,t)\|^2_{L^p(w_I(x))}.
\end{equation}
Let $H_a(t)=\|h_a(x,t)\|^2_{L^p(w_I(x))}$,
then by (\ref{dtc-3}) and Minkowski's inequality we have
\begin{align*}
\int_{t_1}^{t_2} \int |h(x,t)|^p & w_I(x)\,dx\,dt = \int_{t_1}^{t_2} \|h(x,t)\|^p_{L^p(w_I(x))} dx\,dt 
\\
& \leq C_1^p  \int_{t_1}^{t_2}  \Big[ \sum_a H_a(t) \Big]^{p/2}  dxdt 
\\
& = C_1^p  \Big\| \sum_a H_a(t) \Big\|_{L^{p/2}([t_1,t_2])}^{p/2}
\\
& \leq C_1^p  \Big[ \sum_a \|  H_a(t) \|_{L^{p/2}([t_1,t_2]) } \Big]^{p/2}
\\
& = C_1^p  \left[ \sum_a \Big( \int_{t_1}^{t_2} \|h_a(x,t)\|_{L^p(w_I(x))}^p dt \Big)^{2/p} \right]^{p/2},
\end{align*}
which proves (\ref{parallel1}). 

\end{proof}


\subsection{Local coordinates adjusted to the Cantor bush}

The key geometric observation is that, for each $a\in A_k$, the Cantor branches corresponding to $a'\in A_{k+1,a}$ 
in the next iteration 
can be treated as parallel when restricted to segments of somewhat shorter length. More precisely, the corresponding part of the Cantor bush can be covered efficiently by a rescaled and rotated copy of the set $\cale$ from Lemma \ref{dtc-lemma2}.

\begin{lemma}\label{coordinates}
Let $a\sim 1$. Define new coordinate systems $(u,v)$ on $\RR^2_{x,t}$ and $(\eta,\tau)$ on $\RR^2_{\xi,s}$:
\begin{equation}\label{cc-e1}
\begin{split}
&u=\frac{x-at}{1+a^2},\ \ v=\frac{ax+t}{1+a^2},
\\
&\eta=\xi-as, \ \ \tau=a\xi+s.
\end{split}
\end{equation}
Then:

\smallskip

(a) We have $(u,v)^T=\mathbb{A}(x,y)^T$ and $(\eta,\tau)^T=(\mathbb{A}^T)^{-1}(\xi,s)^T$, where 
$\mathbb{A}$ is an orthogonal matrix.
Hence the coordinate systems $(u,v)$ and $(\eta,\tau)$ are orthogonal and dual to each other.

\smallskip

(b) Let $0<\xi_1<\xi_2$, $0<\Delta a$, and $S\subset [N-1]$. Assume that
\begin{equation}\label{cc-e2}
\xi_2-\xi_1 \leq \frac{\xi_1}{N}.
\end{equation}
Then the set 
$$\mathcal{K}=\mathcal{K}[a,\Delta a, \xi_1,\xi_2] 
=\bigcup_{b\in S} \mathcal{K}_b,
$$
where
$$
\mathcal{K}_b:= \left\{(\xi,  s):\ \xi_1\leq \xi \leq \xi_2,\ 
-\frac{s}{\xi} \in a + \frac{\Delta a}{N}\Big(b+[0,1] \Big)
\right\}$$
is contained in
$$ \mathcal{E}:= \left\{(\eta,\tau):\ \xi_1(1+a^2) \leq \eta \leq \xi_2(1+a^2+a\Delta a),\ 
\tau \in -\xi_1 \frac{\Delta a}{N}(S+[ 0, 2 ] )
\right\},$$
with the individual branches $\mathcal{K}_b$ contained in the corresponding strips 
$\mathcal{E}_b:=\{(\eta,\tau)\in\mathcal{E}: \tau\in-\xi_1 \frac{\Delta a}{N}\big(b+[ 0, 2 ] \big)\}$
of $\mathcal{E}$.
\end{lemma}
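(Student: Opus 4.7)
My plan is to establish (a) by direct matrix computation and then (b) by an explicit parametrization of each branch $\mathcal{K}_b$ followed by straightforward algebra, where the hypothesis (\ref{cc-e2}) enters at exactly one crucial inequality.

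For part (a), I set
$$
\mathbb{A}=\frac{1}{1+a^2}\begin{pmatrix} 1 & -a \\ a & 1 \end{pmatrix},
$$
which is a positive scalar multiple of the rotation matrix $\frac{1}{\sqrt{1+a^2}}\bigl(\begin{smallmatrix} 1 & -a \\ a & 1 \end{smallmatrix}\bigr)$; in particular its two rows are orthogonal, so the $(u,v)$-axes are perpendicular in $\RR^2_{x,t}$. A direct multiplication confirms $(u,v)^T=\mathbb{A}(x,t)^T$ from the formulas in (\ref{cc-e1}). For the Fourier side, I compute $\mathbb{A}^T=\frac{1}{1+a^2}\bigl(\begin{smallmatrix} 1 & a \\ -a & 1 \end{smallmatrix}\bigr)$ and then $(\mathbb{A}^T)^{-1}=\bigl(\begin{smallmatrix} 1 & -a \\ a & 1 \end{smallmatrix}\bigr)$, which applied to $(\xi,s)^T$ yields exactly $(\eta,\tau)^T$. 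Thus the two coordinate systems are dual in the usual sense (the exponent $x\xi+ts$ becomes $u\eta+v\tau$ up to the scaling that will later be harmless in our decoupling applications).

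For part (b), I parametrize a generic point of $\mathcal{K}_b$ as
$$
(\xi,s)=\bigl(\xi,\ -\xi[a+(\Delta a/N)(b+\theta)]\bigr),\qquad \xi\in[\xi_1,\xi_2],\ \theta\in[0,1].
$$
Substituting into (\ref{cc-e1}) gives
$$
\tau=a\xi+s=-\xi(\Delta a/N)(b+\theta),\qquad \eta=\xi-as=(1+a^2)\xi+a\xi(\Delta a/N)(b+\theta).
$$
The bounds on $\eta$ are immediate: the lower bound is $(1+a^2)\xi_1$ since all terms are positive, and the upper bound follows from $b+\theta\leq N$, which converts the second summand into at most $a\,\Delta a\,\xi_2$, giving $\eta\leq(1+a^2+a\Delta a)\xi_2$.

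The only delicate point — and the step where (\ref{cc-e2}) is decisive — is verifying that $\tau$ lies in the correct translate of $[0,2]$. Writing $\lambda=\xi/\xi_1\in[1,1+1/N]$, we have $\tau=-\xi_1(\Delta a/N)\cdot\lambda(b+\theta)$, and we need $\lambda(b+\theta)\in[b,b+2]$. The lower inequality is trivial. For the upper one, expand
$$
\lambda(b+\theta)-b=\theta+(\lambda-1)(b+\theta)\leq \theta(1+(\lambda-1))+(\lambda-1)b\leq \theta\cdot\tfrac{N+1}{N}+\tfrac{N-1}{N}\leq 2,
$$
using $\theta\leq 1$, $b\leq N-1$, and $\lambda-1\leq 1/N$. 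This is exactly the parallelization effect: on the $\xi$-window of length $\xi_1/N$, the cone-like branches of $\mathcal{K}$ are straight enough that the $b$-th branch fits into the strip $\mathcal{E}_b$. I expect this inequality to be the only nontrivial check; the rest of the argument is bookkeeping and will be written out in detail.
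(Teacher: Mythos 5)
Your proof is correct and follows essentially the same route as the paper: a direct matrix computation for (a), and for (b) an explicit parametrization of $\mathcal{K}_b$ reduced to the single bound $\lambda\theta+(\lambda-1)b\leq 2$, which is precisely where the paper also uses \eqref{cc-e2} (the paper phrases it via convexity and checking the endpoints $\xi=\xi_1,\xi_2$, but the content is identical). One tiny remark: the duality $x\xi+ts=u\eta+v\tau$ is exact, not merely ``up to scaling,'' since $(\eta,\tau)^T=(\mathbb{A}^T)^{-1}(\xi,s)^T$ is built to undo the factor $\frac{1}{1+a^2}$ in $\mathbb{A}$.
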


\begin{proof} 
Part (a) is easily verified by direct calculation. We now turn to (b). We need to prove the following: for
\begin{equation}\label{def-b}
b=\frac{\Delta a}{N}(b_0+\Delta b),\ \ b_0\in S,\ \ 0\leq \Delta b \leq 1,
\end{equation}
the line segment 
$J_b=\{(\xi,  s):\ \xi_1\leq \xi \leq \xi_2,\ 
\frac{s}{\xi} +a+b=0\}$ is contained in the set
\begin{equation}\label{def-strip}
 \left\{(\eta,\tau):\ \xi_1(1+a^2) \leq \eta \leq \xi_2(1+a^2+a\Delta a),\ 
\tau \in -\xi_1 \frac{\Delta a}{N}\big(b_0+[ 0, 2 ] \big)
\right\}
\end{equation}
Since the set (\ref{def-strip}) is convex, it suffices to prove this for the endpoints of $J_b$. A very short calculation
shows that these are given by $(\eta_i,\tau_i)$, $i=1,2$, where
$$
\eta_i = \xi_i (1+a^2+ab),\ \ \tau_i = -\xi_i b.
$$
For $b$ as in (\ref{def-b}), we have $0\leq b\leq \Delta a$. This clearly implies that $\eta_1,\eta_2$ satisfy 
the constraint in (\ref{def-strip}). Next, we have
$$
-\tau_1 =\xi_1 b \in  \xi_1 \frac{\Delta a}{N}\big(b_0+[ 0, 1 ] \big).
$$
Finally, we write 
$$
-\tau_2 = \xi_2 b = \xi_1 b + (\xi_2-\xi_1)b \in \xi_1 \frac{\Delta a}{N}\big(b_0+[ 0, 1 ] \big) + (\xi_2-\xi_1)b,
$$
and by (\ref{cc-e2}),
$$0<(\xi_2-\xi_1)b \leq \xi_1 \frac{\Delta a}{N},$$
which completes the proof.


\end{proof}

\begin{corollary}\label{cor-step0} 
Let $\mathcal{K}=\bigcup_{b\in S} \mathcal{K}_b$ be as in Lemma \ref{coordinates}.
For a locally integrable function $g:\RR^2_{x,t}\to\CC$ with $\widehat{g}$ is supported on $\mathcal{K}$,
let $\widehat{g_b}=\one_{ \mathcal{K}_b } \widehat{g}$. Then 
\begin{equation}\label{step0}
\|g\|^2_{L^p(w_Q)} \lesssim \sum_{b\in S} \|g_b\|^2_{L^p(w_Q)}
\end{equation}
for any $L$-square $Q$ with $L=N(\xi_1 \Delta a)^{-1}$. 
\end{corollary}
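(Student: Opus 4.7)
The plan is to reduce (\ref{step0}) to the single-scale Cantor decoupling of Lemma \ref{dtc-lemma2} via the change of coordinates from Lemma \ref{coordinates} followed by an isotropic rescaling, and then use the covering Lemma \ref{lemma-covering}(c) to transfer the estimate back to $(x,t)$.

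First, I pass to the $(u,v),(\eta,\tau)$ coordinates of Lemma \ref{coordinates}. By Lemma \ref{coordinates}(b), $\supp\widehat{g}\subset \mathcal{E}$, whose $\tau$-direction is a union of intervals of common width $2\xi_1\Delta a/N$ indexed by $b\in S$, and whose $\eta$-direction is a single interval; each $\widehat{g_b}$ lies in the corresponding strip $\mathcal{E}_b$. Since $a\sim 1$, the linear map $(x,t)\mapsto (u,v)$ is a scaled orthogonal transformation (one checks $\mathbb{A}^T\mathbb{A}=(1+a^2)^{-1}I$), so the coverings of $\RR^2$ by $L$-squares in the two coordinate systems are compatible up to $O(1)$ factors, in the sense required by Lemma \ref{lemma-covering}(c).

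Next, I rescale isotropically by $L=N(\xi_1\Delta a)^{-1}$: set $\tilde g(\tilde u,\tilde v)=g(L\tilde u,L\tilde v)$ and similarly for each $\tilde g_b$, with corresponding dual Fourier variables $(\tilde \eta,\tilde \tau)=(L\eta,L\tau)$. In these coordinates, the $\tilde \tau$-direction of $\supp\widehat{\tilde g}$ becomes exactly $-(S+[0,2])$, while the $\tilde \eta$-direction is some interval $[\tilde L_1,\tilde L_2]$. After a harmless reflection $\tilde v\mapsto -\tilde v$ and the swap that places the $\Lambda(p)$ variable first (both of which leave the proof of Lemma \ref{dtc-lemma2} intact, as it is symmetric in the two variables), this matches the hypothesis of Lemma \ref{dtc-lemma2} exactly. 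That lemma therefore yields, for every $1$-square $\tilde Q$ in the $(\tilde u,\tilde v)$-plane,
\[
\|\tilde g\|_{L^p(w_{\tilde Q})}^2 \lesssim C_1^2 \sum_{b\in S} \|\tilde g_b\|_{L^p(w_{\tilde Q})}^2.
\]

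To conclude, a $1$-square in $(\tilde u,\tilde v)$ corresponds to an $L$-square $Q'$ in $(u,v)$, and $w_{\tilde Q}(\tilde u,\tilde v)=w_{Q'}(L\tilde u,L\tilde v)$; the Jacobian factor $L^{2}$ appears identically on both sides of the inequality and cancels, so the same decoupling holds for every $L$-square in $(u,v)$. Using $\one_{Q'}\lesssim w_{Q'}$ on $Q'$ to strip the weight on the left, and then applying Lemma \ref{lemma-covering}(c) with the two compatible $L$-square coverings, transfers the estimate to $L$-squares $Q$ in the original $(x,t)$ coordinates, giving (\ref{step0}). The only step that requires genuine care, rather than being hard, is the bookkeeping after rescaling to confirm that $\supp\widehat{\tilde g_b}$ lands exactly in the configuration demanded by Lemma \ref{dtc-lemma2}; all the analytic input --- Bourgain's $\Lambda(p)$ estimate and the near-parallelism of Cantor branches at the scale $\xi_1/N$ --- has already been absorbed into Lemmas \ref{dtc-lemma2} and \ref{coordinates}.
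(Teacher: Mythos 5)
Your proof is correct and follows essentially the same route as the paper's: change coordinates as in Lemma \ref{coordinates}, rescale isotropically by $L$ so that the $\tau$-support of $\widehat{g}$ becomes $-(S+[0,2])$, apply Lemma \ref{dtc-lemma2} (up to the harmless reflection and variable swap you mention), undo the rescaling and coordinate change, and transfer to $L$-squares in $(x,t)$ via Lemma \ref{lemma-covering}(c). Your remark that $\mathbb{A}^T\mathbb{A}=(1+a^2)^{-1}I$, so $\mathbb{A}$ is a \emph{scalar multiple} of an orthogonal matrix rather than literally orthogonal as stated in Lemma \ref{coordinates}(a), is a useful precision; since $a\sim 1$ the distortion factor is $O(1)$ and the two $L$-square coverings remain compatible, so the argument is unaffected.
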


\begin{proof}
We change the coordinates as in Lemma \ref{coordinates}. The function $h(\eta,\tau)
= g(x(\eta,\tau),t(\eta,\tau))$ is Fourier supported in $\mathcal{E}$ and satisfies the assumptions of the lemma, with 
$h_b(\eta,\tau) = g_b(x(\eta,\tau),t(\eta,\tau))$ for $b\in S$. Note that the set $\mathcal{E}$ is a 
rescaled and reflected copy of the set $E\times [L_1,L_2]$ (with appropriate $L_1,L_2$) from Lemma \ref{dtc-lemma2}.
Applying (\ref{decoupling2}) to a scaled copy of $h$ and then undoing the scaling and the coordinate change, we get that (\ref{step0}) holds
with $Q$ replaced by any $L$-square in the $(\eta,\tau)$ coordinates. To pass to $L$-squares in
the $(x,t)$ coordinates, we use
Lemma \ref{lemma-covering}.

\end{proof}


\subsection{The inductive argument for the Cantor bush}

\begin{lemma}\label{partition}
Let $\mathcal{K}_{j}^\epsilon= \bigcup_{a\in A_j} \mathcal{K}_{j,a}^\epsilon$, where
\begin{align*}
\mathcal{K}_{j,a}^\epsilon 
&= \left\{ (\xi,s)\in\RR^2:\ \   
\frac{1}{2}N^j \leq |\xi| \leq 4 N^{j+2},\ \ |s + a \xi | \leq 2N^{j\epsilon} \right\}.
\end{align*}
Then $\mathcal{K}_{j}^\epsilon$ can be covered by $O(N^{-2+j\epsilon})$ finitely overlapping sets of the form
$\mathcal{K}_j^0= \bigcup_{a\in A_j} \mathcal{K}_{j,a}^0$, where
$$
\mathcal{K}_{j,a}^0= \mathcal{K}_{j,a}^0[a_0,\xi_1,\xi_2]
= \left\{ (\xi,s)\in\RR^2:\ \   
\xi_1 \leq |\xi| \leq \xi_2,\ \ -\frac{s}{\xi}\in a_0 + a+[0,N^{-j}]
 \right\}
$$
with 
$\frac{1}{4}N^j \leq \xi_1 \leq 4 N^{j+2}$,
$\frac{\xi_1}{2N}  \leq \xi_2-\xi_1 \leq \frac{\xi_1}{N}$, $a_0\in \frac{1}{2}N^{-j}\ZZ$ and $|a_0|=O(N^{-j+j\epsilon})$.

\smallskip
Furthermore, if $G_a\in L^p(\RR^2)$ is Fourier-supported in $\mathcal{K}_{j,a}^\epsilon$, then there is a decomposition
\begin{equation}\label{partition-Lp}
G_a=\sum_{i\in\mathcal{I}} g_a^{(i)},
\end{equation}
where the summation runs over a set $\mathcal{I}$ of cardinality $O(N^{-2+j\epsilon})$,
each $g_a^{(i)}$ is Fourier supported in some set $\mathcal{K}_{j,a}^0 [a_0,\xi_1,\xi_2]$ as above,
and $\| g_a^{(i)}\|_p\lesssim_N \|G_a\|_p$ with the constant independent of $j$.

\end{lemma}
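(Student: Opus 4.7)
The plan is to do the covering separately in the $\xi$ direction and in the slope direction $-s/\xi$, then assemble the $L^p$ decomposition via a smooth partition of unity on the Fourier side. For the $\xi$ direction, I would cover the annulus $\frac{1}{2}N^j \leq |\xi| \leq 4 N^{j+2}$ by a finitely overlapping family of intervals $[\xi_1,\xi_2]$ taken in geometric progression with ratio in $[1+\frac{1}{2N},\,1+\frac{1}{N}]$, which automatically enforces $\frac{\xi_1}{2N}\leq \xi_2-\xi_1\leq \frac{\xi_1}{N}$; this is exactly the hypothesis (\ref{cc-e2}) of Lemma~\ref{coordinates}, which is what makes the scale useful for the decoupling step. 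The annulus spans a factor $O(N^2)$, so only $O(N\log N)$ intervals are required.

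For the slope direction, fix such a $[\xi_1,\xi_2]$. Since $|\xi|\gtrsim N^j$ throughout the interval, the slice of $\mathcal{K}_{j,a}^\epsilon$ above $[\xi_1,\xi_2]$ is contained in $\{|-s/\xi - a|\lesssim N^{-j+j\epsilon}\}$. I would cover this by the slabs $\{-s/\xi \in a_0+a+[0,N^{-j}]\}$ for $a_0$ running through the $\frac{1}{2}N^{-j}$-net inside $[-CN^{-j+j\epsilon},CN^{-j+j\epsilon}]$, which gives $O(N^{j\epsilon})$ shift values. Crucially, the same list of shifts $a_0$ works simultaneously for every $a\in A_j$ because the shift only affects the slope, so each triple $(a_0,\xi_1,\xi_2)$ defines a single set $\mathcal{K}_j^0=\bigcup_{a\in A_j}\mathcal{K}_{j,a}^0[a_0,\xi_1,\xi_2]$. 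Multiplying the two counts and noting that the overlaps in each direction are bounded by a constant give the total claimed.

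For the $L^p$ decomposition, I would fix a prototype bump $\chi\in C_c^\infty(\RR^2)$ adapted to a model parallelogram and produce affine copies $\chi_{i,a}$ subordinate to the covering of $\mathcal{K}_{j,a}^\epsilon$, so that $\sum_i \chi_{i,a}\equiv 1$ on $\mathcal{K}_{j,a}^\epsilon$ and $\mathrm{supp}\,\chi_{i,a}\subset \mathcal{K}_{j,a}^0[a_0^{(i)},\xi_1^{(i)},\xi_2^{(i)}]$. Setting $\widehat{g_a^{(i)}}=\chi_{i,a}\widehat{G_a}$ gives $g_a^{(i)}=G_a * \mathcal{F}^{-1}\chi_{i,a}$, and Young's inequality yields $\|g_a^{(i)}\|_p\leq \|\mathcal{F}^{-1}\chi_{i,a}\|_1\,\|G_a\|_p$. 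Since each $\chi_{i,a}$ is an affine-linear image of the fixed template $\chi$, and the $L^1$ norm of the inverse Fourier transform is preserved under affine changes of variable (the Jacobian factors cancel between the dilation of $\chi$ and the corresponding contraction of its Fourier transform), the constant is $\lesssim_N 1$ uniformly in $j$, $a$, and $i$.

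The main obstacle, such as it is, is the bookkeeping required to verify that a single prototype parallelogram and its affine-image cutoffs can be chosen with constants depending only on $N$. This is essentially the change of variables already developed in Lemma~\ref{coordinates}, in which each slab $\mathcal{K}_{j,a}^0[a_0,\xi_1,\xi_2]$ becomes, in the rotated $(\eta,\tau)$ coordinates associated with $a+a_0$, an axis-aligned rectangle whose side lengths are $\sim \xi_1(1+(a+a_0)^2)/N$ and $\sim \xi_1 N^{-j}(1+(a+a_0)^2)$; since $a,a_0=O(1)$, the aspect ratio depends only on $N$. No further analytic input beyond a standard partition-of-unity construction and Young's inequality is needed.
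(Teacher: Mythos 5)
Your proposal is correct and follows essentially the same route as the paper: cover the annulus in $\xi$ by strips of the prescribed relative width, cover the slope variable by shifts $a_0$ in a $\frac{1}{2}N^{-j}$-net of an $O(N^{-j+j\epsilon})$-interval, multiply the counts, and then build $g_a^{(i)}$ from $G_a$ via a smooth partition of unity on the Fourier side together with Young's inequality. Your count of $O(N\log N)$ strips in $\xi$ (from the geometric progression) is a little sharper than the paper's $O(N^2)$, but both are harmless since the dependence on $N$ is absorbed into the $\lesssim_N$; the cover size stated in the lemma, $O(N^{-2+j\epsilon})$, is evidently a sign typo for $O(N^{2+j\epsilon})$, and both your count and the paper's are consistent with that. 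The only place you expend more effort than the paper is in arguing that the cutoffs can be taken to be affine copies of a fixed template; the paper simply asserts the existence of a partition of unity with $\|(\Xi^{(i)}_a)^\vee\|_1 = O_N(1)$, which is adequate at this level of detail, and your observation that each $\mathcal{K}_{j,a}^0$ becomes a bounded-eccentricity region (contained in a parallelogram whose aspect depends only on $N$) after the shear $\tau = s+(a+a_0)\xi$ is the right way to justify that assertion.
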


\begin{proof}
We first cover the strip 
$\frac{1}{2}N^j \leq |\xi| \leq 4 N^{j+2}$ by $O(N^2)$ strips $\xi_1 \leq |\xi| \leq \xi_2$ as indicated.
Suppose now that $(\xi,s)\in \mathcal{K}_{j,a}^\epsilon$ for some $a\in A_j$. Then
$$
\left| \frac{s}{\xi}+a \right| \leq \frac{2N^{j\epsilon}}{|\xi|} \lesssim N^{-j+j\epsilon},
$$
so that $-\frac{s}{\xi}\in a_0 +a+[0,N^{-j}] \subset a_0+E_j$ for some $a_0$ as in the statement of the lemma.

For the second claim, allowing overlaps in the covering of $\mathcal{K}_{j,a}^\epsilon$,
we can associate with it a smooth partition of unity $\{\Xi^{(i)}_a\}_{i\in\mathcal{I}}$ such that
$\sum_{i\in\mathcal{I}} \Xi^{(i)}_a = 1$ on $\mathcal{K}_{j,a}^\epsilon$ and
$\big\| (\Xi^{(i)}_a )^\vee \big\|_1=O_N(1)$ uniformly in $j$ and $a$. Then the functions 
$g_a^{(i)}= G_a* ( \Xi^{(i)}_a)^\vee $ satisfy the desired conclusions, with the $L^p$ estimate following
from Young's inequality. 
\end{proof}

\begin{proposition}\label{multidecoupling}
Let $\mathcal{K}_{j}^0[a_0,\xi_1,\xi_2]$ be as in Lemma \ref{partition}. For a function $g:\RR^2\to \CC$ with 
$\supp \widehat{g}\subset \mathcal{K}_{j}^0[a_0,\xi_1,\xi_2]$, and for $k=1,\dots,j$, write $g=\sum_{a\in A_k} g_{k,a}$ with $\widehat{g_{k,a}}$ supported in the set
\begin{equation}\label{subbush}
\left\{ (\xi,s)\in\RR^2: \xi_1\leq \xi\leq \xi_2 ,\ -\frac{s}{\xi} \in a_0+a+[0,N^{-k}] \right\}.
\end{equation}
(Note that this defines $g_{k,a}$ uniquely since the sets above are disjoint for different $a\in A_k$.)
Then there is a constant $C_2$ (independent of $N,k,j$) such that for any $N^k L$-square $Q_k$ with $L =\xi_1^{-1}$,  we have 
\begin{equation}\label{multi-e}
\Big( \sum_{S\in\mathcal{T}(Q_k)} \|g \|^p_{L^p(w_S)}\Big)^{1/p} 
\leq  C_2 ^{k} \Big(\sum_{a\in A_k} \| g_{k,a}\|_{L^p(w_{Q_k})}^2\Big)^{1/2},
\end{equation}
where $Q_k=\bigcup_{S\in\mathcal{T}(Q_k)}S$ is a tiling of $Q_k$ by $L$-squares.
\end{proposition}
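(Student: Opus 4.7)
The plan is to prove Proposition \ref{multidecoupling} by induction on $k$. For the base case $k=1$, Corollary \ref{cor-step0} applies directly with center $a = a_0 + 1$ and digit spacing $\Delta a = 1$ in Lemma \ref{coordinates}, giving single-step decoupling on $NL$-squares; the LHS $(\sum_{S \in \mathcal{T}(Q_1)} \|g\|^p_{L^p(w_S)})^{1/p}$ is controlled by $\|g\|_{L^p(w_{Q_1})}$ via the standard weight-aggregation bound $\sum_{S \in \mathcal{T}(Q_1)} w_S \lesssim w_{Q_1}$, which closes the base case with $C_2 = O(C_1)$.

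For the inductive step $k \to k+1$, I regroup the level-$(k+1)$ decomposition by writing $g = \sum_{a' \in A_k} g_{k, a'}$, where $g_{k, a'} = \sum_{a'' \in A_{k+1, a'}} g_{k+1, a''}$ is the unique piece of $g$ whose Fourier transform is supported on the strip $\mathcal{K}_{k, a'}$. I then tile $Q_{k+1}$ by $N^k L$-squares $Q_k$ and apply the inductive hypothesis at level $k$ on each $Q_k$. Taking $p$-th powers, summing over $Q_k \subset Q_{k+1}$, and commuting the sum over $Q_k$ past the $\ell^2$ sum over $a'$ by Minkowski's inequality in $\ell^{p/2}$ (valid because $p \geq 2$), together with the polynomial weight aggregation $\sum_{Q_k \subset Q_{k+1}} w_{Q_k} \lesssim w_{Q_{k+1}}$, yields
\begin{equation*}
\Big(\sum_{S \in \mathcal{T}(Q_{k+1})} \|g\|^p_{L^p(w_S)}\Big)^{1/p} \leq C_0 \, C_2^k \Big(\sum_{a' \in A_k} \|g_{k, a'}\|^2_{L^p(w_{Q_{k+1}})}\Big)^{1/2}
\end{equation*}
for an absolute constant $C_0$ that is in particular independent of $N$.

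Next I apply the single-step Cantor decoupling of Corollary \ref{cor-step0} separately to each $g_{k, a'}$, in the rotated coordinates of Lemma \ref{coordinates} adapted to $a = a_0 + a'$ with $\Delta a = N^{-k}$. The resulting decoupling scale is $L' = N(\xi_1 \cdot N^{-k})^{-1} = N^{k+1} L$, which matches the side length of $Q_{k+1}$, so
\begin{equation*}
\|g_{k, a'}\|^2_{L^p(w_{Q_{k+1}})} \leq C_1^2 \sum_{a'' \in A_{k+1, a'}} \|g_{k+1, a''}\|^2_{L^p(w_{Q_{k+1}})}.
\end{equation*}
Summing over $a' \in A_k$ and chaining with the previous display closes the induction with $C_2 = C_0' C_1$ for an absolute constant $C_0'$. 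The $N$-independence of $C_2$ is essential: it is what allows the final loss $C_2^j$ to be absorbed into $N^{j\epsilon}$ for arbitrary $\epsilon > 0$ upon choosing $N$ sufficiently large.

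The main difficulty is not analytic but organizational. Each use of Corollary \ref{cor-step0} happens in a rotated $(u, v)$ frame adapted to a different $a'$; since $a_0 + a' \sim 1$ all these rotations are by bounded angles and the frames are compatible with $(x, t)$, so Lemma \ref{lemma-covering}(c) handles the transfer of weights. The scale condition $\xi_2 - \xi_1 \leq \xi_1/N$ built into Lemma \ref{partition} is exactly what Lemma \ref{coordinates} needs to justify the ``near-parallel strips'' approximation at each stage, and one must track throughout that every constant coming from Minkowski, from weight aggregation, and from Bourgain's $\Lambda(p)$ estimate is absolute, with no hidden $N$-dependence.
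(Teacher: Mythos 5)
Your proposal is correct and follows essentially the same route as the paper: initialize via Corollary \ref{cor-step0}, then iterate by tiling $Q_{k+1}$ into $N^kL$-squares, applying the inductive hypothesis on each, passing Minkowski's inequality in $\ell^{p/2}$, aggregating weights, and finally applying one more step of Corollary \ref{cor-step0} with $\Delta a = N^{-k}$ at scale $N^{k+1}L$. Your treatment of the base case (aggregating $L$-square weights into $w_{Q_1}$ before applying the one-step decoupling, and taking center $a_0+1$ rather than $a_0$) is if anything slightly more careful than the paper's wording.
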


\begin{proof}
The proof is similar to the proof of Proposition 1 in \cite{Laba-Wang}
We proceed by induction in $k$, using Corollary \ref{cor-step0} at each step.
To initialize, we write using the notation from Lemma \ref{coordinates}
$$\mathcal{K}_{j}^0[a_0,\xi_1,\xi_2] \subset \mathcal{K}[a_0,1,\xi_1,\xi_2].$$
Applying Corollary \ref{cor-step0}, we get that
\begin{equation} \label{step1}
\| g \|^2_{L^p(w_{Q_1})}\leq C_3 \sum_{a\in A_1} \| g_{1,a}\|_{L^p(w_{Q_1})}^2 ,
\end{equation}
with $Q_1$ as above and some constant $C_3$ independent of $N,j$. Similarly, for $1\leq k\leq j-1$ and
$a\in A_k$, the set (\ref{subbush}) is contained in $\mathcal{K}[a_0+a, N^{-k}, \xi_1,\xi_2]$. 
Applying Corollary \ref{cor-step0} again, we get 
\begin{equation} \label{stepk}
\| g_{k,a}\|^2_{L^p(w_{Q_{k+1}})}\leq C_3 \sum_{b\in A_{k+1,a}} \| g_{k+1,b}\|_{L^p(w_{Q_{k+1}})}^2 .
\end{equation}

To put the inductive argument together, we use parallel decoupling. 
Let $C_4$ be a constant such that for all $N^kL$ squares $Q_k$ with $k\geq 1$,
\begin{equation}\label{weight-compare}
\sum_{S\in\mathcal{T}(Q_k)}w_S \leq C_4 w_{Q_k}.
\end{equation}
By the rapid decay of $w$, we can choose $C_4$ independent of $N$ and $k$.

We first note that (\ref{multi-e}) for $k=1$ is provided by (\ref{step1}), with the trivial tiling consisting of a single square.
Assume now that we have (\ref{multi-e}) for some $k$ with $1\leq k \leq j-1$. 
Let $Q_{k+1}$ be an $N^{k+1}L$-square, and let $Q_{k+1}=\bigcup_{Q_k\in \mathcal{J}} Q_k$ be a tiling of $Q_{k+1}$ by $N^kL$-squares. By the inductive assumption (\ref{multi-e}), Minkowski's inequality in $\ell^{p/2}(\mathcal{J})$, a rescaling of (\ref{weight-compare}), and (\ref{stepk}), in that order, we have
\begin{equation}\label{Minkowski}
\begin{split}
\sum_{S\in\mathcal{I}(Q_{k+1})} \|g \|^p_{L^p(w_S)}
&= \sum_{Q_{k}\in\mathcal{J}} \sum_{S\in\mathcal{T}(Q_k)} \| g \|^p_{L^p(w_S)}\\
&\leq  C_2^{kp} \sum_{Q_k\in\mathcal{J}} \Big( \sum_{a\in A_k} \| g_{k,a}\|_{L^p(w_{Q_k})}^2 \Big)^{p/2}
\\
&\leq  C_2^{kp} \left[ \sum_{a\in A_k} \Big( \sum_{Q_k\in\mathcal{J}} \|g_{k,a}\|_{L^p(w_{Q_k})}^p \Big)^{2/p} \right]^{p/2}
\\
&\leq  C_2^{kp} C_4 \left[ \sum_{a\in A_k}  \| g_{k,a}\|_{L^p(w_{Q_{k+1}})}^2  \right]^{p/2}
\\
&\leq  C_2^{kp} C_4 C_3^p \left[ \sum_{a\in A_{k+1}}  \|f_{k+1,a}\|_{L^p(w_{Q_{k+1}})}^2  \right]^{p/2}.
\end{split}
\end{equation}
This proves (\ref{multi-e}) with $C_2=C_3^{1/2}C_4^{1/p}$.

\end{proof} 

\begin{corollary}\label{decoupling-global}
Let $g=\sum_{a\in A_j} g_{j,a}$ be as in Proposition \ref{multidecoupling}.
Then
\begin{equation}\label{multi-global}
\|g\|_{L^p(\RR^2)} 
\lesssim  C_2 ^{j} 
\Big(\sum_{a\in A_j} \| g_{j,a}\|_{L^p(\RR^2)}^2\Big)^{1/2}.
\end{equation}
\end{corollary}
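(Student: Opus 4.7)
The plan is to pass from the local inequality (\ref{multi-e}) at scale $k=j$ to a global estimate by tiling $\RR^2$ with $N^jL$-squares and assembling the pieces via Minkowski's inequality. This is the standard global-from-local move for decoupling estimates, and the only input we need beyond Proposition \ref{multidecoupling} is the weight bookkeeping provided by Lemma \ref{lemma-covering}.

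First I would fix a tiling $\RR^2=\bigcup_\ell Q_j^{(\ell)}$ of the plane by $N^jL$-squares. For each tile $Q_j^{(\ell)}$, I tile it further by $L$-squares $S\in\mathcal{T}(Q_j^{(\ell)})$. Since these $S$ cover $Q_j^{(\ell)}$, every $x\in Q_j^{(\ell)}$ lies in some $S$, where $w_S(x)=1$, hence $\sum_{S\in\mathcal{T}(Q_j^{(\ell)})}w_S\ge \one_{Q_j^{(\ell)}}$ pointwise. This yields
\begin{equation*}
\int_{Q_j^{(\ell)}}|g|^p \leq \sum_{S\in\mathcal{T}(Q_j^{(\ell)})}\|g\|_{L^p(w_S)}^p,
\end{equation*}
and applying (\ref{multi-e}) at $k=j$ with $Q_k=Q_j^{(\ell)}$ gives
\begin{equation*}
\int_{Q_j^{(\ell)}}|g|^p \leq C_2^{jp}\Big(\sum_{a\in A_j}\|g_{j,a}\|_{L^p(w_{Q_j^{(\ell)}})}^2\Big)^{p/2}.
\end{equation*}

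Summing over $\ell$, using $\|g\|_{L^p(\RR^2)}^p = \sum_\ell \int_{Q_j^{(\ell)}}|g|^p$, and applying Minkowski's inequality in $\ell^{p/2}$ (legitimate since $p\ge 2$), I obtain
\begin{equation*}
\|g\|_{L^p(\RR^2)}^p \leq C_2^{jp}\sum_\ell\Big(\sum_{a\in A_j}\|g_{j,a}\|_{L^p(w_{Q_j^{(\ell)}})}^2\Big)^{p/2} \leq C_2^{jp}\Big(\sum_{a\in A_j}\Big(\sum_\ell \|g_{j,a}\|_{L^p(w_{Q_j^{(\ell)}})}^p\Big)^{2/p}\Big)^{p/2}.
\end{equation*}

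Finally, by Lemma \ref{lemma-covering}(a) applied to the tiling by $N^jL$-squares, $\sum_\ell w_{Q_j^{(\ell)}}\lesssim 1$ pointwise, so
\begin{equation*}
\sum_\ell \|g_{j,a}\|_{L^p(w_{Q_j^{(\ell)}})}^p = \int |g_{j,a}|^p\sum_\ell w_{Q_j^{(\ell)}}\,dx\,dt \lesssim \|g_{j,a}\|_{L^p(\RR^2)}^p.
\end{equation*}
Substituting this back yields (\ref{multi-global}) with an implicit constant independent of $N$ and $j$. The only nontrivial step is the Minkowski swap, which requires $p\ge 2$; everything else is pure weight bookkeeping, and there is no real obstacle.
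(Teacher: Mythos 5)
Your proof is correct and takes essentially the same route as the paper: apply Proposition \ref{multidecoupling} at $k=j$ on each $N^jL$-square, Minkowski in $\ell^{p/2}$, and Lemma \ref{lemma-covering}(a) to recombine the weights; the only cosmetic difference is that you tile $\RR^2$ directly while the paper tiles an arbitrarily large $MN^jL$-square and uses uniformity in $M$. One small wording slip: $w_S(x)$ does not equal $1$ for all $x\in S$ (it equals $1$ only at the center of $S$), but $w_S\gtrsim \one_S$ with a constant independent of the scale, which is all you need, so your first displayed inequality should carry a $\lesssim$ rather than a $\leq$.
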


\begin{proof} We use the notation from the proof of Proposition \ref{multidecoupling}.
Note first that by 
(\ref{multi-e}), we have
\begin{equation}\label{boring-1}
\|g\|_{L^p(Q_j)} \lesssim  \Big( \sum_{S\in\mathcal{T}(Q_j)} \|g \|^p_{L^p(w_S)}\Big)^{1/p} 
\leq  C_2 ^{j} \Big(\sum_{a\in A_j} \| g_{j,a}\|_{L^p(w_{Q_j})}^2\Big)^{1/2}.
\end{equation}
Let now $Q$ be an $MN^j L$-square, and let $Q=\bigcup_{Q_j\in \mathcal{J}} Q_j$ be a tiling of $Q$ by $N^jL$-squares. Using (\ref{boring-1}) and Minkowski's inequality in $\ell^{p/2}(\mathcal{J})$ as in (\ref{Minkowski}), we get
\begin{align*}
\|g\|_{L^p(Q)}^p &= \sum_{Q_j\in\mathcal{J}} \|g\|_{L^p(Q_j)}^p
\\
& \lesssim  
 C_2^{jp} \sum_{Q_j\in\mathcal{J}} \Big( \sum_{a\in A_j} \| g_{j,a}\|_{L^p(w_{Q_j})}^2 \Big)^{p/2}
\\
&\leq  C_2^{jp} \left[ \sum_{a\in A_j} \Big( \sum_{Q_j\in\mathcal{J}} \|g_{j,a}\|_{L^p(w_{Q_j})}^p \Big)^{2/p} \right]^{p/2}
\\
&\lesssim  C_2^{jp} \left[ \sum_{a\in A_j}  \| g_{j,a}\|_{L^p(\RR)}^2  \right]^{p/2},
\end{align*}
where the last step follows from Lemma \ref{lemma-covering} (a). Since this holds for any $Q$ with the constant
independent of $M$ and $Q$, we have proved (\ref{multi-global}).
\end{proof}

\begin{corollary}\label{decoupling-usable}
Let $G=\sum_{a\in A_j} G_a$,
where each $G_a\in L^p(\RR^2)$ is Fourier-supported in $\mathcal{K}_{j,a}^\epsilon$.
Then
\begin{equation}\label{multi-usable}
\|G\|_{L^p(\RR^2)} 
\lesssim_N  C_2 ^{j} N^{j\epsilon}
\Big(\sum_{a\in A_j} \| G_{a}\|_{L^p(\RR^2)}^2\Big)^{1/2}.
\end{equation}

\end{corollary}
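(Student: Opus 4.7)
The plan is to reduce to Corollary \ref{decoupling-global} via the partition supplied by Lemma \ref{partition}, then close with a triangle inequality in the partition index.

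First I would apply Lemma \ref{partition} to each $G_a$ to write $G_a = \sum_{i \in \mathcal{I}} g_a^{(i)}$, where $|\mathcal{I}| \lesssim_N N^{j\epsilon}$ (the $O(N^2)$ count coming from the $\xi$-strips is absorbed into $N$-dependent constants), each $g_a^{(i)}$ is Fourier-supported in some $\mathcal{K}_{j,a}^0[a_0^{(i)},\xi_1^{(i)},\xi_2^{(i)}]$, and $\|g_a^{(i)}\|_p \lesssim_N \|G_a\|_p$. The key organisational point is that the parameters $(a_0^{(i)},\xi_1^{(i)},\xi_2^{(i)})$ can be chosen independent of $a$: the partition of unity arises from a single cover of the global bush $\mathcal{K}_j^\epsilon = \bigcup_{a \in A_j} \mathcal{K}_{j,a}^\epsilon$ by sets of the form $\mathcal{K}_j^0[a_0,\xi_1,\xi_2] = \bigcup_{a \in A_j} \mathcal{K}_{j,a}^0[a_0,\xi_1,\xi_2]$, and each $a$-branch of this cover restricts to a cover of $\mathcal{K}_{j,a}^\epsilon$ indexed by the same $\mathcal{I}$.

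Next, for each fixed $i \in \mathcal{I}$, I would set $g^{(i)} := \sum_{a \in A_j} g_a^{(i)}$. By construction $g^{(i)}$ is Fourier-supported in $\mathcal{K}_j^0[a_0^{(i)},\xi_1^{(i)},\xi_2^{(i)}]$, and because the sets $\mathcal{K}_{j,a}^0[a_0^{(i)},\xi_1^{(i)},\xi_2^{(i)}]$ are pairwise disjoint as $a$ ranges over $A_j$ (the observation recorded in Proposition \ref{multidecoupling}), the $g_a^{(i)}$ are exactly the canonical Fourier-restriction pieces of $g^{(i)}$ required by the hypothesis of Corollary \ref{decoupling-global}. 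Applying that corollary together with the per-piece $L^p$ bound gives
$$
\|g^{(i)}\|_{L^p(\RR^2)} \lesssim C_2^j \Big(\sum_{a \in A_j} \|g_a^{(i)}\|_p^2\Big)^{1/2} \lesssim_N C_2^j \Big(\sum_{a \in A_j} \|G_a\|_p^2\Big)^{1/2}.
$$

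Finally, since $G = \sum_{i \in \mathcal{I}} g^{(i)}$, the triangle inequality combined with $|\mathcal{I}| \lesssim_N N^{j\epsilon}$ yields
$$
\|G\|_{L^p(\RR^2)} \leq \sum_{i \in \mathcal{I}} \|g^{(i)}\|_p \lesssim_N N^{j\epsilon} C_2^j \Big(\sum_{a \in A_j} \|G_a\|_p^2\Big)^{1/2},
$$
which is the claimed bound. The only genuinely delicate point is verifying the uniformity-in-$a$ choice of the partition in the first step; everything else is routine bookkeeping on top of Corollary \ref{decoupling-global}. The triangle inequality in $\mathcal{I}$ is crude, but it is precisely what produces the extra $N^{j\epsilon}$ factor — which is harmless because, in the eventual application, $N$ is chosen large after $\epsilon$ is fixed.
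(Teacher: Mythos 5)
Your proof is correct and follows exactly the same route as the paper: decompose via Lemma \ref{partition}, apply Corollary \ref{decoupling-global} to each $g^{(i)}=\sum_{a\in A_j} g_a^{(i)}$, and close with the triangle inequality over the $O_N(N^{j\epsilon})$-element index set. The paper states this in a single sentence; you have simply made the implicit bookkeeping (the uniformity of the covering parameters in $a$, the per-piece $L^p$ bound from Lemma \ref{partition}, and the final summation) explicit.
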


\begin{proof}
This follows by using the decomposition in Lemma \ref{partition} and then applying Corollary \ref{decoupling-global}
to each 
$g^{(i)}=\sum_{a\in A_j} g_a^{(i)}$.

\end{proof}


\section{Proof of Theorems \ref{thm-main} and \ref{thm-average}}


\subsection{Proof of Theorem \ref{thm-main}}

Given $\delta>0$, choose $\epsilon>0$ small enough so that $5\epsilon<\delta$. Let $N$ be sufficiently large so that
$\alpha<\alpha'$ and $\frac{1}{p}-\frac{\alpha}{2}<\epsilon$. This is possible by (\ref{good-alpha}). Throughout
the proof, we may increase $N$ further as needed without changing the other parameters of construction.

By Lemma \ref{lemma-single-scale}, it suffices to prove the single scale maximal estimate (\ref{e-m-restricted}) with $\beta=4\epsilon$, i.e.
\begin{equation}\label{conclude-e1}
\|\tilde\calm f\|_p\leq C_N N^{4j\epsilon} \|f\|_p,\ \ j\geq j_0,
\end{equation}
for all $f\in\cals$ with $\supp \widehat{f}\subset \{N^j\leq|\xi| \leq 2N^{j+1}\}$, with the constant $C_N$
independent of $j$. 
By Lemma \ref{lemma-Fjsuffices}, this will follow if we can prove that the operators $F_{j}$
with $\gamma = (1/p)+\epsilon$ obey 
\begin{equation}\label{conclude-e2}
\|F_{j} f\|_{L^p(dxdt)} \lesssim_N N^{4j\epsilon} \|f\|_{L^p(dx)},\ \ f\in\cals.
\end{equation}

We start with the decomposition $F_j f=\sum_{a\in A_j} F_{j,a} f$ as in Section
\ref{localization}, and an application of Proposition \ref{fourier-local}. By (\ref{f-main}) with $M$ large enough so that 
$\epsilon M > 2$, we have
\begin{equation}\label{main-1}
\begin{split}
\|F_jf\|_p &= \Big\| \sum_{a\in A_{j}} F_{j,a} f \Big\|_p
\\
& \leq \Big\|\sum_{a\in A_{j}} F_{j,a}^{\rm main} f \Big\|_p + O_N( N^{-j} )\|f\|_p.
\end{split}
\end{equation}
where each $F_{j,a}^{\rm main} f$ is Fourier supported in $\mathcal{K}_{j,a}^\epsilon$.
Applying Corollary \ref{decoupling-usable} with $G_a= F_{j,a}^{\rm main} f$, we get 
$$
\Big\|\sum_{a\in A_{j}} F_{j,a}^{\rm main} f \Big\|_p
\lesssim_N N^{2 j\epsilon} \Big(\sum_{a\in A_j} \|
F_{j,a}^{\rm main} f \|_{p}^2\Big)^{1/2}.
$$
Using (\ref{f-main}) again, and then (\ref{falp}), we conclude that
\begin{equation}\label{main-2}
\begin{split}
\|F_jf\|_p
&\lesssim_N N^{2 j\epsilon} \Big(\sum_{a\in A_j} \|
F_{j,a} f \|_{p}^2\Big)^{1/2} + O( N^{-j} )\|f\|_p
\\
&\lesssim_N N^{2 j\epsilon} \Big(\sum_{a\in A_j} N^{j(\gamma-\alpha)} \|
 f \|_{p}^2\Big)^{1/2} + O( N^{-j} )\|f\|_p
\\
&\lesssim_N N^{ j(\gamma-\frac{\alpha}{2} + 2\epsilon) } \|f\|_p.
\end{split}
\end{equation}
We have
$$
\gamma-\frac{\alpha}{2} + 2\epsilon = \frac{1}{p}+\epsilon -\frac{\alpha}{2} + 2\epsilon
\leq 4\epsilon,
$$
so that (\ref{conclude-e2}) holds as claimed. This ends the proof of the theorem.


\subsection{Proof of Theorem \ref{thm-average}}

This is a minor modification of the above. To prove (\ref{lol}), define $F_j$ as in the proof of
Theorem \ref{thm-main}, but now use $\gamma=\frac{1}{p}-\frac{1}{r}+\epsilon$ instead.
Then the exponent at the end of the analogue of (\ref{main-2}) is
 $$
\gamma-\frac{\alpha}{2} + 2\epsilon = \frac{1}{p}-\frac{1}{r}+\epsilon -\frac{\alpha}{2} + 2\epsilon
\leq 4\epsilon-\frac{1}{r},
$$
which is negative if $\epsilon$ is small enough. The estimate (\ref{lol}) follows upon summing up in $j$
and then applying the Sobolev embedding theorem as in Section \ref{sobolev}.

The proof of (\ref{lolsob}) is similar, but with the factor $\sgamma$ in the definition of $F_j$ replaced by
$\langle \xi\rangle^\gamma$. Sobolev's embedding theorem is not needed for this part. 



\vskip.5in

\section{Acknowledgements}
The author was supported by the NSERC Discovery Grant 22R80520, and would like to thank
Malabika Pramanik, Andreas Seeger, Pablo Shmerkin and Joshua Zahl for helpful conversations.


\bigskip

\noindent{\sc Department of Mathematics, UBC, Vancouver,
B.C. V6T 1Z2, Canada}

\smallskip

\noindent{\it  ilaba@math.ubc.ca}

\end{document}